\newtheorem{theorem}{Theorem}[section]
\newtheorem{lemma}[theorem]{Lemma}
\theoremstyle{definition}
\newtheorem{definition}[theorem]{Definition}
\newtheorem{example}[theorem]{Example}
\newtheorem{corollary}[theorem]{Corollary}
\newtheorem{proposition}[theorem]{Proposition}
\theoremstyle{definition}
\newtheorem{remark}[theorem]{Remark}
\theoremstyle{remark}
\numberwithin{equation}{section}
\begin{document}

% \title[short text for running head]{full title}
\title{Shadowing property for set-valued map and its inverse limit}

%    Only \author and \address are required; other information is
%    optional.  Remove any unused author tags.

%    author one information
% \author[short version for running head]{name for top of paper}
\author{Zhengyu Yin}
\address{School of Mathematics, Nanjing University, Nanjing 210093, People's Republic of China}
\curraddr{}
\email{yzy199707@gmail.com}
\thanks{}

%    \subjclass is required.
\subjclass[2020]{37C75,  54H20, 37B20}

\date{}

\dedicatory{}

%    "Communicated by" -- provide editor's name; required.
\commby{}

%    Abstract is required.
\begin{abstract}
In this article, we investigate the relationship between the shadowing property of set-valued maps and their associated inverse limit systems. We show that if a set-valued map is expansive and open in the context of set-valued dynamics, then certain induced inverse limit systems have the shadowing property. Additionally, we prove that a continuous set-valued map has the shadowing property if and only if some of its induced inverse limit system also has shadowing property. Finally, we establish that the shadowing property of a set-valued map is equivalent to the shadowing property of its induced inverse set-valued system.
\end{abstract}
\keywords{set-valued map, open, shadowing property, inverse limit, expansive}

\maketitle

\section{Introduction}
The shadowing property, also known as pseudo-orbit tracing property, is a key concept in dynamical systems that describes the stability of orbits under small perturbations \cite{Wa}. This property plays a crucial role in both stability and ergodic theory \cite{Bl, Bo, Wa, Wa1}, and has been extensively studied in a variety of contexts (see, e.g., \cite{CP, GM0, Pi, PR}). Recently, in \cite{GM}, Good and Meddaugh uncovered a fundamental connection between the shadowing property and inverse limits of inverse systems satisfying the Mittag-Leffler condition, particularly those consisting of shifts of finite type.

For single-valued maps, Sakai \cite{Sa} explored various shadowing properties for positively expansive maps, proving that an expansive map possesses the shadowing property if and only if it is open. In the case of homeomorphisms, Lee and Sakai \cite{LS} demonstrated that different shadowing properties are equivalent for expansive systems. Considering the inverse limit, Chen and Li \cite{CL} showed that a continuous map has the shadowing property if and only if its inverse limit also possesses shadowing property.

For set-valued maps, the inverse limit concept is similarly important, particularly in the context of topological and dynamical properties such as continuum theory \cite{Ma, IM, KN} and chaos \cite{IM1}. Some related topics also appear in these two excellent and rich books \cite{AF,HP} written by Aubin and Frankowska, Shouchuan Hu and Papageorgiou, respectively. For dynamical properties, recently, Kelley and Tennant \cite{KT} introduced the notion of topological entropy for set-valued maps, analogous to the classical topological entropy for single-valued maps. Raines and Tennant \cite{RT} extended the specification property from single-valued maps to set-valued maps, proving that if a set-valued map has the specification property, it is topologically mixing and has positive entropy.
Cordeiro and Pacifico \cite{CP} defined the concept of continuum-wise expansiveness for set-valued maps and proved if a set-valued map on positive dimension space has $cw$-expansive then the set-valued map has positive entropy.
In \cite{MRT},  Metzger,  Morales, and Thieullen. established that a set-valued map satisfying positive expansiveness (as defined by Williams \cite{Wi}) and the pseudo-orbit tracing property (shadowing property, as defined by Pilyugin and Rieger \cite{PR}) ensures that its inverse limit is topologically stable in the sense of set-valued maps.

In this paper, we focus on the shadowing property in set-valued systems and its relationship with inverse limits. Our goal is to investigate the conditions under which a set-valued map with the shadowing property implies that its induced inverse limit also possesses the shadowing property in the sense of single-valued systems.

The paper is organized as follows: In Section 2, we review key definitions and fundamental properties. Section 3 investigates the relationship between openness of set-valued maps and their induced inverse limit actions. In Section 4, we explore the shadowing property for set-valued maps and their corresponding inverse limit systems.

\section{Set-valued maps}

Let $(X,d)$ be a metric space. For each $x \in X$ and $A \subset X$, we denote $\overline{A}$ as the closure of $A$, and define the distance from a point $x$ to a set $A$ as 
$
d(x,A) = \inf \{d(x,a) : a \in A\}.
$
For any $\varepsilon > 0$, we define the open ball centered at $x$ with radius $\varepsilon$ as 
$
B_d(x, \varepsilon) = \{y \in X : d(x,y) < \varepsilon\},
$
and for a set $A \subset X$, we define the $\varepsilon$-neighborhood of $A$ as 
$
B_d(A, \varepsilon) = \{y \in X : d(y, A) < \varepsilon\}.
$

We denote by $2^X$ the collections of all nonempty compact subsets of $X$. Given a compatible metric $d$ on $X$, the \textit{Hausdorff metric} $d_H$ on $2^X$ is defined by 
\[
d_H(A,B) = \max\{\sup_{a \in A} d(a,B), \sup_{b \in B} d(A,b)\}.
\]
It is known that the topology induced by the Hausdorff metric on $2^X$ is independent of the choice of the compatible metric $d$ on the space $X$.

\begin{definition}\label{def1.1}
Let $X$ be a metric space with metric $d$. A set-valued map $F : X \to 2^X$ is defined as follows: 
\begin{enumerate}[(a)]
    \item $F$ is \textit{upper semicontinuous} if for every $x \in X$ and every open set $U \supset F(x)$, there exists an open neighborhood $V$ of $x$ such that $F(y) \subset U$ for all $y \in V$;
    \item $F$ is \textit{lower semicontinuous} if for any $x \in X$ and an open set $U$ intersecting $F(x)$ (i.e., $F(x) \cap U \neq \emptyset$), there exists an open neighborhood $V$ of $x$ such that $F(y) \cap U \neq \emptyset$ for all $y \in V$;
    \item Moreover, $F$ is \textit{continuous} if it is both upper and lower semicontinuous.
\end{enumerate}
\end{definition}

If $F : X \to 2^X$ is a set-valued map on $X$, we call the pair $(X, F)$ a \textit{set-valued system}.
In this paper, we use the notation $f$ to denote a single-valued map and $F$ to denote a set-valued map. Unless otherwise stated, we assume that $X$ is a compact metric space.

For a subset $A \subset X$, we denote
\[
F(A) := \{x \in X : x \in F(a) \text{ for some } a \in A\} = \bigcup_{a \in A} F(a),
\]
and the inverse image of $F$ by 
\[
F^{-1}(A) := \{x \in X : F(x) \cap A \neq \emptyset\}.
\]
Next, we recall some basic properties for set-valued maps, some of which may have appeared in previous works (see e.g., \cite{AF, HP,Mi}).

\begin{proposition} \rm{(\cite{AF,HP})}\label{pro3}
    A set-valued map $F$ on $X$ is upper semicontinuous if and only if for any sequence $(x_n, y_n)$ with $y_n \in F(x_n)$ that converges to a point $(x,y) \in X \times X$, it follows that $y \in F(x)$.
\end{proposition}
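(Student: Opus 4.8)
The plan is to recognize that the stated sequential condition is precisely the assertion that the graph $G_F = \{(x,y) \in X \times X : y \in F(x)\}$ is closed, and then to prove the two implications separately. The compactness of $X$ (and hence of $X \times X$), together with the fact that each value $F(x)$ is a nonempty compact set, will be the essential ingredients; the first makes the forward estimate work and the second supplies a convergent subsequence in the converse.

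For the forward implication I would assume $F$ is upper semicontinuous and argue by contradiction. Given $(x_n, y_n) \to (x,y)$ with $y_n \in F(x_n)$ but $y \notin F(x)$, I would use that $F(x)$ is compact to obtain $\delta := d(y, F(x)) > 0$. The open set $U = B_d(F(x), \delta/2)$ contains $F(x)$, so upper semicontinuity provides a neighborhood $V$ of $x$ with $F(z) \subset U$ for all $z \in V$. Since $x_n \to x$, eventually $x_n \in V$ and hence $y_n \in F(x_n) \subset U$, giving $d(y_n, F(x)) < \delta/2$. But $y_n \to y$ and $z \mapsto d(z, F(x))$ is continuous, so $d(y_n, F(x)) \to \delta > \delta/2$, a contradiction. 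Therefore $y \in F(x)$.

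For the converse I would again argue by contradiction, and this is the step where compactness does the real work. If $F$ fails to be upper semicontinuous, there is a point $x$ and an open set $U \supset F(x)$ such that every ball $B_d(x, 1/n)$ contains a point $x_n$ with $F(x_n) \not\subset U$; I then select $y_n \in F(x_n) \setminus U$. Now $x_n \to x$, while every $y_n$ lies in the closed, hence compact, set $X \setminus U$. Compactness of $X$ lets me pass to a subsequence along which $y_{n_k} \to y \in X \setminus U$, so that $(x_{n_k}, y_{n_k}) \to (x,y)$ with $y_{n_k} \in F(x_{n_k})$. The hypothesis then yields $y \in F(x) \subset U$, contradicting $y \notin U$.

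The main obstacle, as indicated, is the converse direction: without compactness the sequence $(y_n)$ need not subconverge, and the equivalence can genuinely fail for noncompact $X$. Here it is exactly the standing assumption that $X$ is compact that guarantees $X \setminus U$ is compact and delivers the convergent subsequence, so I would state its use explicitly. I would likewise flag in the forward direction that the compactness of the value $F(x)$ is what makes $d(y, F(x)) > 0$ available once $y \notin F(x)$, since otherwise the separation argument would not close.
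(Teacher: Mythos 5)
Your proof is correct. Note that the paper does not actually prove this proposition---it is stated with a citation to Aubin--Frankowska and Hu--Papageorgiou---so there is no in-paper argument to compare against; what you have written is the standard closed-graph characterization of upper semicontinuity for compact-valued maps on a compact space. Both implications are sound, and you correctly isolate the two places where compactness matters: compactness of the value $F(x)$ to get $d(y,F(x))>0$ in the forward direction, and compactness of $X\setminus U$ to extract the convergent subsequence in the converse (which is indeed the direction that fails without compactness of $X$).
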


\begin{proposition}\label{prop1.2}
Let $(X, F)$ be a set-valued system and $d$ a compatible metric on $X$. Then $F$ satisfies the following:
\begin{enumerate}[(a)]
    \item $F$ is upper semicontinuous if and only if $F^{-1}(A)$ is closed for each $A \in 2^X$;
    \item $F$ is lower semicontinuous if and only if $F^{-1}(B)$ is open for each open subset $B$ of $X$;
    \item $F$ is continuous if and only if $F(x_i)$ converges to $F(x)$ under Hausdorff metric whenever $x_i$ converges to $x$ in $X$.
\end{enumerate}
\end{proposition}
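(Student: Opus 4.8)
\emph{Proof proposal.} The three parts are proved separately; (a) and (b) are essentially topological reformulations of the definitions, while (c) is obtained by matching each of the two semicontinuity conditions to one of the two ``excess'' terms appearing in the Hausdorff metric. Throughout I write $e(A,B)=\sup_{a\in A}d(a,B)$, so that $d_H(A,B)=\max\{e(A,B),e(B,A)\}$, and I use repeatedly that for a compact set $K$ and a disjoint closed set $C$ in a metric space one has $d(K,C)>0$.

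For (a), I would argue through complements. Assuming $F$ upper semicontinuous and $A\in 2^X$, I show that $X\setminus F^{-1}(A)=\{x:F(x)\cap A=\emptyset\}$ is open: for such an $x$ the set $U=X\setminus A$ is open and contains $F(x)$, so upper semicontinuity yields a neighborhood $V$ of $x$ with $F(y)\subset X\setminus A$, i.e. $V\subset X\setminus F^{-1}(A)$. Conversely, assuming every $F^{-1}(A)$ is closed, and given $x$ with $F(x)\subset U$ for $U$ open, I set $A=X\setminus U$; here compactness of $X$ is used to guarantee $A$ is compact, hence $A\in 2^X$ when nonempty, the case $A=\emptyset$ being trivial. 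Since $x\notin F^{-1}(A)$ and $F^{-1}(A)$ is closed, some neighborhood $V$ of $x$ misses $F^{-1}(A)$, which says exactly that $F(y)\subset U$ for all $y\in V$.

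For (b), the statement is almost a direct translation of the definition. If $F$ is lower semicontinuous and $B$ is open, then any $x\in F^{-1}(B)$ satisfies $F(x)\cap B\neq\emptyset$, and lower semicontinuity applied with $U=B$ produces a neighborhood of $x$ contained in $F^{-1}(B)$; conversely, if each $F^{-1}(B)$ is open, then for $x$ with $F(x)\cap U\neq\emptyset$ the open set $F^{-1}(U)$ is a neighborhood of $x$ on which $F(y)\cap U\neq\emptyset$.

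The substance lies in (c), which I would split into two one-sided equivalences. First: $F$ is upper semicontinuous if and only if $e(F(x_i),F(x))\to0$ whenever $x_i\to x$. The forward direction applies upper semicontinuity to $U=B_d(F(x),\varepsilon)$ to force $F(x_i)\subset U$ eventually; the reverse direction is by contradiction, extracting from a failure of upper semicontinuity a sequence $x_i\to x$ and points $z_i\in F(x_i)$ lying outside a fixed open $U\supset F(x)$, whence $e(F(x_i),F(x))\ge d(X\setminus U,F(x))>0$. Second: $F$ is lower semicontinuous if and only if $e(F(x),F(x_i))\to0$ whenever $x_i\to x$. Here the forward direction uses compactness of $F(x)$ to cover it by finitely many balls $B_d(p_k,\varepsilon)$ with $p_k\in F(x)$, applies lower semicontinuity at each center to obtain a common neighborhood $V$ on which $F(y)\cap B_d(p_k,\varepsilon)\neq\emptyset$ for every $k$, and then concludes $e(F(x),F(y))\le 2\varepsilon$ for $y\in V$; the reverse direction again contradicts a failure of lower semicontinuity using an open ball around a point of $F(x)\cap U$. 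Combining the two equivalences with $d_H=\max\{e(F(x_i),F(x)),e(F(x),F(x_i))\}$ yields (c). I expect the finite-cover step in the lower semicontinuous half to be the main technical point, since it is where compactness of the images $F(x)$ is genuinely needed to upgrade the pointwise ``meeting'' condition of lower semicontinuity to uniform control of the excess.
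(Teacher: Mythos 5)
The paper states Proposition \ref{prop1.2} without proof, recalling it as a standard fact from the set-valued analysis literature, so there is no argument of the author's to compare against. Your proof is correct and is the standard one: parts (a) and (b) are the complement/translation arguments (and you rightly identify that compactness of $X$ is what makes $A = X\setminus U$ an element of $2^X$ in the converse of (a)), while in (c) the decomposition of $d_H$ into the two excess functionals --- matching upper semicontinuity to $e(F(x_i),F(x))$ and lower semicontinuity to $e(F(x),F(x_i))$, with the finite subcover of $F(x)$ supplying the uniformity needed on the lower-semicontinuous side --- is exactly the usual route.
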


Let $f$ be a single-valued map on $X$. Recall that $f$ is \textit{open} (resp. \textit{closed}) if $f$ maps open (resp. closed) subset to open (resp. closed) subset As an application of the continuity of a set-valued map, we have the following known lemma. For completeness, we include proof here.

\begin{lemma} (Continuity of fiber map)\label{lem2.66}
    Let $f: X \to Y$ be a continuous map between compact metric spaces. Then $f$ is open if and only if the fiber set-valued map $F: X \to 2^X$ defined by 
    $F(x) = f^{-1}(f(x))$ 
    is continuous.
\end{lemma}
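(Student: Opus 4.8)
The plan is to reduce the whole equivalence to a single set-theoretic identity relating preimages under $F$ and under $f$, and then read off upper and lower semicontinuity directly from Proposition~\ref{prop1.2}. First I would record that $F$ genuinely takes values in $2^X$: each fiber $f^{-1}(f(x))$ contains $x$ and is a closed subset of the compact space $X$, hence a nonempty compact set. Then I would establish the basic identity
\[
F^{-1}(B) = f^{-1}(f(B)) \qquad \text{for every } B \subseteq X,
\]
which follows immediately from the definitions: $x \in F^{-1}(B)$ means $f^{-1}(f(x)) \cap B \neq \emptyset$, i.e. some $b \in B$ satisfies $f(b) = f(x)$, i.e. $f(x) \in f(B)$.

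Next I would observe that $F$ is automatically upper semicontinuous, independent of any openness hypothesis, so that the content of the lemma lies entirely in lower semicontinuity. Using Proposition~\ref{pro3}, if $(x_n, y_n) \to (x, y)$ with $y_n \in F(x_n)$, then $f(y_n) = f(x_n)$ and continuity of $f$ forces $f(y) = f(x)$, whence $y \in F(x)$; equivalently, via Proposition~\ref{prop1.2}(a) and the identity above, $F^{-1}(A) = f^{-1}(f(A))$ is closed for each $A \in 2^X$ because $f(A)$ is compact. Thus $F$ is continuous if and only if it is lower semicontinuous, and by Proposition~\ref{prop1.2}(b) this means precisely that $f^{-1}(f(B))$ is open for every open $B \subseteq X$. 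For the forward direction, assuming $f$ is open, for any open $B$ the image $f(B)$ is open in $Y$, so by continuity of $f$ the set $f^{-1}(f(B)) = F^{-1}(B)$ is open; hence $F$ is lower semicontinuous, and combined with the automatic upper semicontinuity, $F$ is continuous.

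The converse is where the compactness hypotheses must be used, and I expect it to be the main obstacle: from $f^{-1}(f(U))$ being open I need to deduce that $f(U)$ itself is open in $Y$, which does not follow from point-set generalities alone (indeed, if $f$ fails to be surjective the conclusion must be interpreted relative to the image $f(X)$, as a one-point example shows). The key input is that a continuous surjection from a compact space onto a Hausdorff space is a closed map, hence a quotient map; here I would regard $f$ as a map onto $f(X)$, or assume $f$ surjective, so that openness is measured on the image. For a quotient map, a set $V$ is open exactly when $f^{-1}(V)$ is open, and applying this with $V = f(U)$ together with $f^{-1}(f(U)) = F^{-1}(U)$ — which is open by lower semicontinuity of $F$ — yields that $f(U)$ is open. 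This shows $f$ is open and completes the equivalence.
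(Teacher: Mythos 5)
Your proof is correct, and for the forward direction it coincides with the paper's: both rest on the identity $F^{-1}(B)=f^{-1}(f(B))$ together with Proposition~\ref{prop1.2}. (Your extra observation that upper semicontinuity of $F$ is automatic — since $f(A)$ is compact for any $A\in 2^X$, so $F^{-1}(A)=f^{-1}(f(A))$ is always closed — is accurate; the paper proves this under the openness hypothesis but never uses that hypothesis for it.) The converse is where you genuinely diverge. The paper invokes Kelley's characterization of open maps (for every closed $A$, the union of fibers contained in $A$ is closed), identifies that union as $(F^{-1}(A^c))^c$, and concludes from lower semicontinuity. You instead argue that $f$, being a continuous map from a compact space to a Hausdorff space, is closed and hence a quotient map onto its image, so that $f(U)$ is open precisely when its saturated preimage $f^{-1}(f(U))=F^{-1}(U)$ is — which lower semicontinuity supplies. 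The two arguments are dual formulations of the same underlying fact, but yours is self-contained where the paper outsources to \cite[Theorem 3.10]{Ke}, and it makes visible exactly where compactness enters. You also correctly flag a point the paper passes over silently: without surjectivity (or reading ``open'' relative to $f(X)$) the converse fails, as the constant map from a one-point space shows; both Kelley's criterion and the quotient-map argument need $f$ onto $Y$. In the paper's only application the map is the surjective projection $\pi_0$, so nothing breaks, but your caveat is a legitimate sharpening of the statement rather than a defect in your proof.
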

\begin{proof}
    Suppose that $f$ is open. Let $U \subset X$ be an open subset. Then 
    \[
    x \in F^{-1}(U) \Longleftrightarrow f^{-1}(f(x)) \cap U \neq \emptyset \Longleftrightarrow f(x) \in f(U) \Longleftrightarrow x \in f^{-1}(f(U)).
    \]
    Thus, $F^{-1}(U) = f^{-1}(f(U))$ is open, proving that $F$ is lower semicontinuous.

    Let $A \in 2^X$. By a similar argument, $F^{-1}(A) = f^{-1}(f(A))$ is closed. Therefore, $F$ is upper semicontinuous. Hence, by Proposition \ref{prop1.2} $F$ is continuous.

    Conversely, suppose $F$ is continuous. An equivalent description of a continuous map being open is that for any closed subset $A \subset X$, the union of all fibers contained in $A$ is itself closed (see \cite[Theorem 3.10]{Ke}). Notice that the set $Q$ consisting of points such that $f^{-1}(f(x)) \subset A$ is equal to $( F^{-1}(A^c) )^c$ and as $F$ is lower semicontinuous, $Q$
    is closed. Thus, $f$ is open.
\end{proof}

%    Text of article.
\section{Inverse limit  and openness}
Let $(X,d)$ be a compact metric space and $\mathbb{A}=-\mathbb{N}\text{ or }\mathbb{N}$. Let $X^\mathbb{A}$ be the space endowed with the product topology. We consider the compatible metric $\rho$ on $X^\mathbb{A}$ which is defined by
\[\rho((x_n),(y_n))=\sum_{n\in\mathbb{A}}^\infty\displaystyle\frac{d(x_n,y_n)}{2^{|n|+1}},\text{ for all }(x_n),(y_n)\in X^\mathbb{A},\]
and let $\rho_H$ be the corresponding Hausdorff metric on $2^{X^\mathbb{A}}$ induced by $\rho.$

As $X$ is compact, for convenience, we all assume that the diameter of $X$ is $1$ under its compatible metric $d$.

In \cite{KT, RT}, the authors introduced various orbit spaces in the study of their connections with topological entropy. We recall the definitions of these orbit spaces:
\begin{align*}
    &{\rm{Orb}}_l(X)=\{(\cdots,x_{-1},x_0)\in X^{-\mathbb{N}}:x_i\in F(x_{i+1})\},\\
     &{\rm{Orb}}_r(X)=\{(x_0,x_1,\cdots)\in X^\mathbb{N}:x_{i+1}\in F(x_i)\},\\
     &{\rm{Orb}}_{inv}(X)=\{(x_0,x_1,\cdots)\in X^\mathbb{N}:x_i\in F(x_{i+1})\}.
\end{align*}
 Due to the nice property of upper semicontinuity (see Proposition \ref{pro3}), if $F$ is an upper semicontinuous set-valued map, then these orbit spaces are all closed subsets of  $X^\mathbb{A}$. 
 Furthermore, if $F$ is a single-valued continuous map, then ${\rm{Orb}}_{inv}(X)$ coincides with the inverse limit in the classical sense (see \cite{CL}).

For convenience, we denote points in ${\rm{Orb}}_l(X)$, ${\rm{Orb}}_r(X)$ and ${\rm{Orb}}_{inv}(X)$ by $(x_{-n}), (x_{n})$ and $[x_n]$, respectively. Denote $\pi_k$ by the $k$th projection from ${\rm{Orb}}_r(X)$ to $X$, and by $\varphi_k$ the $k$th projection from ${\rm{Orb}}_{inv}(X)$ to $X$, that is, $\pi_k((x_n))=x_k$ and $\varphi_k([y_n])=y_k$.

With the above notations above, we can define the following single-valued and set-valued maps, respectively.
\begin{align*}
    &{\sigma}_{lF}:{\rm{Orb}}_l(X)\to {\rm{Orb}}_l(X)\text{ by }(x_{-n})\mapsto (x_{-n-1}),\\
    &{\sigma}_{rF}:{\rm{Orb}}_r(X)\to {\rm{Orb}}_r(X)\text{ by }(x_{n})\mapsto (x_{n+1}),\\
    &{F}_{inv}:{\rm{Orb}}_{inv}(X)\to 2^{{\rm{Orb}}_{inv}(X)}\text{ by }[x_n]\mapsto F(x_0)\times [x_n].
\end{align*}
It is easy to check that $F$ is upper semicontinuous (continuous) if and only if $F_{inv}$ is upper semicontinuous (continuous).

Similar to single-valued case, we say that a set-valued map $F$ is \textit{closed} if $F(A)\in 2^X$ for all $A\in 2^X$ and \textit{open} if $F(U)$ is open for any open subset $U$ in $X$.

\begin{lemma}\rm{(\cite{AF,HP})}\label{lem2.3}
  If $F$ is an upper semicontinuous set-valued map then $F$ is closed.
\end{lemma}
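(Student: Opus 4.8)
The plan is to verify the paper's definition directly. Recall that, per the definition immediately preceding the statement, saying ``$F$ is closed'' means $F(A) \in 2^X$ for every $A \in 2^X$; that is, $F(A) = \bigcup_{a \in A} F(a)$ must be nonempty and compact whenever $A$ is nonempty and compact. Nonemptiness is immediate: since $A \neq \emptyset$ I may fix some $a_0 \in A$, and because $F$ takes values in $2^X$ the fiber $F(a_0)$ is a nonempty compact subset of $F(A)$, so $F(A) \neq \emptyset$. The substantive content is the compactness of $F(A)$, and since $X$ is a compact metric space this reduces to showing that $F(A)$ is closed.

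For closedness I would invoke the sequential closed-graph characterization of upper semicontinuity furnished by Proposition \ref{pro3}, using it as the one nontrivial input. Take an arbitrary sequence $(y_n)$ in $F(A)$ with $y_n \to y$ in $X$; by definition of $F(A)$ there exist base points $a_n \in A$ with $y_n \in F(a_n)$. Since $A$ is compact, I pass to a subsequence along which $a_{n_k} \to a$ for some $a \in A$, while $y_{n_k} \to y$ is inherited from the full sequence. Then $(a_{n_k}, y_{n_k}) \to (a,y)$ in $X \times X$ with $y_{n_k} \in F(a_{n_k})$, so Proposition \ref{pro3} forces $y \in F(a) \subset F(A)$. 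Hence $F(A)$ contains all its limits, so it is closed and therefore compact, giving $F(A) \in 2^X$ as required.

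The main obstacle is exactly this closedness step, and it is worth isolating why it is not automatic: the set $F(A)$ is an \emph{arbitrary} union $\bigcup_{a \in A} F(a)$ of closed fibers, and such unions need not be closed, so no purely set-theoretic argument suffices. Three facts conspire to rescue the argument, and I would make their roles explicit: compactness of $A$ (to extract a convergent subsequence of the base points $a_n$), the closed-graph property of Proposition \ref{pro3}, which is precisely where upper semicontinuity is genuinely used (to transport the membership relation $y_n \in F(a_n)$ to the limit), and compactness of $X$ (to upgrade the resulting closedness of $F(A)$ to compactness). Dropping upper semicontinuity breaks the middle step, and $F(A)$ may then fail to be closed; once the subsequence extraction is arranged, however, no estimates are needed and the verification is routine.
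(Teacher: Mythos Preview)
Your argument is correct: nonemptiness is trivial, and for compactness you rightly reduce to closedness of $F(A)$ in the compact metric space $X$, then use compactness of $A$ to extract a convergent subsequence of base points and apply Proposition~\ref{pro3} to pass the membership $y_{n_k}\in F(a_{n_k})$ to the limit. Note that the paper does not actually supply its own proof of this lemma---it merely cites \cite{AF,HP}---so there is nothing to compare against; your proof is exactly the standard closed-graph argument one finds in those references.
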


Then, we prove a technique lemma for further use.

\begin{lemma}\label{lem3.12}
    Let $(X,F)$ be a continuous set-valued system with metric $d$ on $X$. For each $\delta>0$, there is $\delta_1>0$ such that if $d(x,y)<\delta_1$, then $\rho_H(\pi_0^{-1}(x),\pi_0^{-1}(y))<\delta$, where $\rho_H$ is the Hausdorff metric on $2^{Orb_r(X)}$. 
\end{lemma}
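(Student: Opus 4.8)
The plan is to recognize that the asserted inequality is exactly the statement that the fiber map $x \mapsto \pi_0^{-1}(x)$ from $X$ into $2^{\mathrm{Orb}_r(X)}$ is uniformly continuous. Before any estimate, I would record that each $\pi_0^{-1}(x)$ is a nonempty compact subset of $\mathrm{Orb}_r(X)$: it is nonempty because $F(x)\neq\emptyset$ lets one build a full forward orbit based at $x$, and it is closed in the compact space $\mathrm{Orb}_r(X)$. Hence $\pi_0^{-1}(x)\in 2^{\mathrm{Orb}_r(X)}$ and $\rho_H(\pi_0^{-1}(x),\pi_0^{-1}(y))$ is meaningful. The central observation is that the weights $2^{-(n+1)}$ in $\rho$ are summable, so only finitely many coordinates need to be controlled: choosing $N$ with $2^{-(N+1)}<\delta/2$, and using $\operatorname{diam}(X)=1$, the contribution of all coordinates of index $>N$ to $\rho$ is at most $\sum_{n>N}2^{-(n+1)}=2^{-(N+1)}<\delta/2$ for \emph{any} pair of orbits. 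It therefore suffices to match the first $N$ coordinates to within total weighted error $\delta/2$.

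To match those coordinates I would invoke uniform continuity of $F$. By Proposition \ref{prop1.2}(c) together with compactness of $X$, $F$ admits a modulus of continuity $\omega$, i.e. $d(a,b)<\omega(\eta)$ forces $d_H(F(a),F(b))<\eta$; in particular every point of $F(a)$ then lies within $\eta$ of $F(b)$. I define a finite decreasing family of tolerances \emph{backward}: set $\eta_N=\delta/2$ and $\eta_{i}=\min\{\omega(\eta_{i+1}),\,\delta/2\}$ for $i=N-1,\dots,0$, and put $\delta_1=\eta_0$, which is positive since it is a finite composition of positive quantities. Now assume $d(x,y)<\delta_1$. Given any $(x_n)\in\pi_0^{-1}(x)$, set $y_0=y$; having chosen $y_i$ with $d(x_i,y_i)<\eta_i$, the inequality $\eta_i\le\omega(\eta_{i+1})$ yields $d_H(F(x_i),F(y_i))<\eta_{i+1}$, so from $x_{i+1}\in F(x_i)$ I obtain some $y_{i+1}\in F(y_i)$ with $d(x_{i+1},y_{i+1})<\eta_{i+1}$. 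This produces $y_0,\dots,y_N$ with $d(x_i,y_i)<\eta_i\le\delta/2$, and I then extend the orbit arbitrarily by picking $y_{n+1}\in F(y_n)$ for $n\ge N$ (possible since $F(y_n)\neq\emptyset$). The resulting sequence lies in $\pi_0^{-1}(y)$ and satisfies $\rho((x_n),(y_n))<\tfrac{\delta}{2}\sum_{n=0}^{N}2^{-(n+1)}+2^{-(N+1)}<\delta$.

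Since $a\mapsto \rho(a,\pi_0^{-1}(y))$ is continuous on the compact set $\pi_0^{-1}(x)$ and is everywhere strictly below $\delta$ by the construction above, its maximum is attained and is $<\delta$; the hypothesis $d(x,y)<\delta_1$ being symmetric, the same argument with the roles of $x$ and $y$ exchanged bounds the other one-sided supremum, and together these give $\rho_H(\pi_0^{-1}(x),\pi_0^{-1}(y))<\delta$, as required. The only genuine difficulty is the possible accumulation of error across coordinates: a naive coordinatewise tracking would demand an infinite composition of the modulus $\omega$, which could collapse $\delta_1$ to $0$. This is precisely why I truncate at level $N$ using summability of the weights and perform the tolerance selection backward over the finitely many relevant coordinates, so that the tail is controlled for free and the head costs only $N$ applications of uniform continuity.
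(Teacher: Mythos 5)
Your argument is correct and follows essentially the same route as the paper's own proof: truncate at a level $N$ where the summable weights make the tail contribute less than $\delta/2$, choose a finite chain of tolerances backward via uniform continuity of $F$ (Proposition \ref{prop1.2}(c)) to match the first $N$ coordinates, extend the orbit arbitrarily afterward, and conclude by symmetry of the two one-sided suprema. The only differences are cosmetic (you use $\delta/2$ per coordinate where the paper uses $\delta/4$, and you make the backward recursion and the compactness of the fibers explicit).
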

\begin{proof}
    Choose $N\in \mathbb{N}$ such that 
    $\sum_{n=N}^\infty{1}/{2^n}<\delta/2.$
    Since $F$ is continuous, by Proposition \ref{prop1.2} there exists a sequence $\{\delta_1,\cdots,\delta_{N-1}\}$, where $\delta_i<\delta/4$ satisfying: 
     \begin{enumerate}[(N-1)]
     \item if $d(x,y)<\delta_1$, then $d_H(F(x),F(y))<\delta_2,$
     \item if $d(x_{i-1},y_{i-1})<\delta_i$ then $d_H(F(x_{i-1}),F(y_{i-1}))<\delta_{i+1}$ for all $2\leq i\leq N-2$,
     \item if $d(x_{N-2},y_{N-2})<\delta_{N-1}$, then $d_H(F(x_{N-2}),F(y_{N-2}))<\delta/4$.
     \end{enumerate}
     Let $(y_n)\in \pi_0^{-1}(y)$. Since $d(x,y)<\delta_1$ we can find $x_1\in F(x)$ such that 
    $ d(x_1,y_1)<\delta_2.$
     By induction, we construct a sequence $\{x_1,\cdots,x_{N-1}\}$ such that 
     \begin{enumerate}[(1)]
         \item $x_{i+1}\in F(x_i)$ and $d(x_i,y_i)<\delta_{i+1}$ for $i=1,\cdots,N-2$ and 
         \item $d(x_{N-1},y_{N-1})<\delta/4$.
     \end{enumerate}
    Extend the sequence by taking $x_{j}\in F(x_{j-1})$ for all $j\geq N$. This produces an $F$-orbit $(x_n)$ such that 
     \[\rho((x_n),(y_n))<\sum_{i=0}^{N-1}\displaystyle\frac{d(x_i,y_i)}{2^{i+1}}+\sum_{n=N}^\infty\displaystyle\frac{1}{2^n}<\delta.\]

Similarly, for each $(x_n)\in \pi_0^{-1}(x)$, there is an element $(y_n)\in \pi_0^{-1}(y)$ such that $\rho((x_n),(y_n))<\delta$. Hence, we conclude that $\rho_H\left(\pi_0^{-1}(x),\pi_0^{-1}(y)\right)<\delta.$
\end{proof}

\begin{corollary}\label{cor3.3}
    Let $(X,F)$ be a set-valued system. If $F$ is continuous,
        then the projection map $\pi_0$ is open.
\end{corollary}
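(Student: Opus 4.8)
The plan is to read off openness of $\pi_0$ directly from the uniform fiber-continuity established in Lemma~\ref{lem3.12}. Recall that $\pi_0$ being open means $\pi_0(U)$ is open in $X$ for every open $U\subset {\rm Orb}_r(X)$. So I would fix such a $U$ and an arbitrary point $x\in\pi_0(U)$, and aim to produce a $d$-ball around $x$ contained in $\pi_0(U)$.

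Since $x\in\pi_0(U)$, choose a preimage $(x_n)\in U$ with $\pi_0((x_n))=x$. As $U$ is open and $\rho$ is a compatible metric on ${\rm Orb}_r(X)$, there is $\varepsilon>0$ with $B_\rho((x_n),\varepsilon)\subset U$. I would then apply Lemma~\ref{lem3.12} with $\delta=\varepsilon$ to obtain $\delta_1>0$ such that $d(x,y)<\delta_1$ forces $\rho_H(\pi_0^{-1}(x),\pi_0^{-1}(y))<\varepsilon$, and claim that $B_d(x,\delta_1)\subset\pi_0(U)$.

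To verify the claim, take any $y$ with $d(x,y)<\delta_1$. By the definition of the Hausdorff metric, $\rho_H(\pi_0^{-1}(x),\pi_0^{-1}(y))<\varepsilon$ yields in particular $\rho((x_n),\pi_0^{-1}(y))<\varepsilon$, since $(x_n)\in\pi_0^{-1}(x)$. Because ${\rm Orb}_r(X)$ is compact (it is closed in $X^{\mathbb{N}}$ by upper semicontinuity of $F$, see Proposition~\ref{pro3}) and $\pi_0$ is continuous, the fiber $\pi_0^{-1}(y)$ is a nonempty compact set, so the infimum is attained: there is $(y_n)\in\pi_0^{-1}(y)$ with $\rho((x_n),(y_n))<\varepsilon$. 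Hence $(y_n)\in B_\rho((x_n),\varepsilon)\subset U$, and $\pi_0((y_n))=y$ shows $y\in\pi_0(U)$. This proves the claim, so $\pi_0(U)$ is open, and therefore $\pi_0$ is open.

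The whole argument is essentially a single application of Lemma~\ref{lem3.12}, and I do not expect a genuine obstacle. The one point requiring a little care is the direction of the Hausdorff metric: one needs the estimate $\sup_{a\in\pi_0^{-1}(x)}\rho(a,\pi_0^{-1}(y))<\varepsilon$, measuring how far the fiber over $x$ sits from the fiber over $y$, together with compactness of the fibers (and nonemptiness, which holds since each $F(\cdot)$ is a nonempty compact set so every point admits a forward orbit) to convert the distance bound into an actual point $(y_n)\in U$ lying over $y$. Alternatively, one could note that Lemma~\ref{lem3.12} establishes (uniform) continuity of the map $x\mapsto\pi_0^{-1}(x)$ and invoke Lemma~\ref{lem2.66}, but the direct route above is shorter and self-contained.
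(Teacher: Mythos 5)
Your proof is correct, and the key ingredient is the same as the paper's (Lemma~\ref{lem3.12}), but the way you cash it in is different. The paper defines the fiber map $\widetilde{F}((x_n)) = \pi_0^{-1}(\pi_0((x_n)))$ on ${\rm Orb}_r(X)$, observes via Lemma~\ref{lem3.12} that it is continuous, and then invokes Lemma~\ref{lem2.66} (whose ``continuous fiber map $\Rightarrow$ open'' direction rests on Kelley's characterization of open maps through unions of fibers inside closed sets). You instead unwind the definition directly: given open $U$ and $(x_n)\in U$ over $x$, a $\rho$-ball $B_\rho((x_n),\varepsilon)\subset U$ plus the Hausdorff estimate $\rho_H(\pi_0^{-1}(x),\pi_0^{-1}(y))<\varepsilon$ for $d(x,y)<\delta_1$ produces a point of $\pi_0^{-1}(y)$ inside $U$, so $B_d(x,\delta_1)\subset\pi_0(U)$. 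This is the more elementary and self-contained route --- it only uses the ``$\sup_{a\in\pi_0^{-1}(x)}\rho(a,\pi_0^{-1}(y))$'' half of the Hausdorff bound (i.e.\ lower semicontinuity of the fiber map, which is exactly what openness is equivalent to) and avoids the external citation in Lemma~\ref{lem2.66}. Two minor remarks: you do not actually need compactness of the fibers to extract $(y_n)$, since an infimum strictly below $\varepsilon$ is already attained within $\varepsilon$ by some point of the set; and your side remarks on nonemptiness of fibers and closedness of ${\rm Orb}_r(X)$ are correct and appropriately placed. You also note the paper's alternative route yourself, so nothing is missing.
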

\begin{proof}
    For any $(x_n)\in {\rm{Orb}}_r(X)$, we define
    $\widetilde{F}((x_n)):=\pi^{-1}_0(\pi_0((x_n)))=\pi^{-1}_0(x_0).$
    By Lemma \ref{lem3.12} if $F$ is continuous it is easy to see that 
    $\widetilde{F}$ is a continuous set-valued map. Thus, by Lemma \ref{lem2.66}, $\pi_0$ is open.    
\end{proof}

Let $(X,F)$ be a set-valued system. We say that $F$ is \textit{onto} if for each $y\in X$ there is $x\in X$ such that $y\in F(x)$.
Recall that if $F$ is an upper semicontinuous onto set-valued map on $X$, $F^{-1}$ is also a well-defined upper semicontinuous set-valued map on $X$. Thus, We have the following statement.

\begin{theorem}\label{thm3.4}
   Let $(X,F)$ be an upper semicontinuous set-valued system, the following statements hold.
   \begin{enumerate}[(a)]
   \item If $F$ is onto, then $F$ is open if and only if $F^{-1}$ is continuous.
       \item  If $F$ is open then $\sigma_{rF}$ and $\sigma_{lF}$ are also open
     
       \item  $F$ is continuous on $X$. Then ${\sigma}_{rF}$ is open implies that $F$ is open.
   \end{enumerate}
\end{theorem}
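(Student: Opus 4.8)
The plan is to handle the three statements in turn, reducing each to the behaviour of the preimage operator together with the elementary fact that the shifts simply move cylinder constraints by one coordinate. Throughout, the one computation doing all the work is the identity relating $(F^{-1})^{-1}$, $F$, and the $F$-image of an open set.

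For (a), since $F$ is upper semicontinuous and onto, $F^{-1}$ is a well-defined upper semicontinuous set-valued map, so by Proposition \ref{prop1.2}(c) it is continuous exactly when it is lower semicontinuous. By Proposition \ref{prop1.2}(b), $F^{-1}$ is lower semicontinuous iff $(F^{-1})^{-1}(B)$ is open for every open $B \subset X$. The key point is the set-theoretic identity
\[
(F^{-1})^{-1}(B) = \{y \in X : F^{-1}(y) \cap B \neq \emptyset\} = \{y : \exists\, x \in B,\ y \in F(x)\} = F(B),
\]
valid for every $B$. Hence $F^{-1}$ is lower semicontinuous iff $F(B)$ is open for all open $B$, i.e.\ iff $F$ is open, which is precisely the assertion.

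For (b), I would test openness on basic open cylinders $W = \{(x_n) \in {\rm{Orb}}_r(X) : x_i \in U_i,\ 0 \le i \le k\}$, which form a base. Unwinding $\sigma_{rF}((x_n)) = (x_{n+1})$ gives
\[
\sigma_{rF}(W) = \{(y_n) \in {\rm{Orb}}_r(X) : y_0 \in U_1 \cap F(U_0),\ y_i \in U_{i+1}\ (1 \le i \le k-1)\},
\]
(reading as $y_0 \in F(U_0)$ when $k=0$); the only subtlety is that the forgotten coordinate $x_0 \in U_0$ is recorded, through the orbit relation $y_0 \in F(x_0)$, exactly by the constraint $y_0 \in F(U_0)$. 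Since $F$ is open, $F(U_0)$ is open, so $\sigma_{rF}(W)$ is again an open cylinder and $\sigma_{rF}$ is open. The map $\sigma_{lF}$ on ${\rm{Orb}}_l(X)$ is treated identically, the forgotten coordinate being the terminal one and producing an $F(U_0)$-constraint on the new zeroth coordinate.

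For (c), fix open $U \subset X$ and put $W = \pi_0^{-1}(U)$, open in ${\rm{Orb}}_r(X)$. Specializing the computation of (b) to $k=0$ yields
\[
\sigma_{rF}(\pi_0^{-1}(U)) = \{(y_n) \in {\rm{Orb}}_r(X) : y_0 \in F(U)\} = \pi_0^{-1}(F(U)),
\]
so $\sigma_{rF}$ open forces $\pi_0^{-1}(F(U))$ open. It then remains to transport openness from the orbit space down to $X$, and this is the genuinely structural step: $\pi_0 : {\rm{Orb}}_r(X) \to X$ is a continuous surjection (each $x$ begins an $F$-orbit since every $F(x)$ is nonempty) from the compact space ${\rm{Orb}}_r(X)$ (closed in $X^{\mathbb{N}}$ by upper semicontinuity) onto the Hausdorff space $X$, hence a closed map and therefore a quotient map; one could equally invoke that $\pi_0$ is open by Corollary \ref{cor3.3}. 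Either way, $\pi_0^{-1}(F(U))$ open forces $F(U)$ open, so $F$ is open. The routine cost is the bookkeeping of how cylinder constraints transform under the shift; the main obstacle is precisely this last descent in (c), where identifying $\sigma_{rF}(\pi_0^{-1}(U))$ with $\pi_0^{-1}(F(U))$ links the hypothesis on $\sigma_{rF}$ to the conclusion about $F$, and the quotient (closed-map) property of $\pi_0$ is what legitimizes pushing openness back to $X$.
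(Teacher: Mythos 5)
Your proof is correct. Parts (a) and (b) follow essentially the paper's own route: (a) rests on the same identity $(F^{-1})^{-1}(B)=F(B)$ combined with Proposition \ref{prop1.2}, and (b) is the paper's argument recast as an exact computation of the image of a basic cylinder rather than an inductive choice of metric balls $B_d(x_n,\varepsilon_n)\subset F(B_d(x_{n-1},\varepsilon_{n-1}))$; your closed-form identity for $\sigma_{rF}(W)$ is tidier but carries the same content. The genuine divergence is in (c). The paper descends from ${\rm{Orb}}_r(X)$ to $X$ by invoking Corollary \ref{cor3.3} (openness of $\pi_0$), which in turn rests on the metric estimate of Lemma \ref{lem3.12} and uses the full continuity of $F$; it writes $F(U)=\pi_0(\sigma_{rF}(\pi_0^{-1}(U)))$ and applies the open map $\pi_0$. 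You instead identify $\sigma_{rF}(\pi_0^{-1}(U))=\pi_0^{-1}(F(U))$ and descend via the observation that $\pi_0$ is a continuous closed surjection from a compact space onto a Hausdorff space, hence a quotient map. This buys two things: it bypasses Lemma \ref{lem3.12} entirely, and it shows the continuity hypothesis in (c) is stronger than needed for this implication --- upper semicontinuity alone guarantees that ${\rm{Orb}}_r(X)$ is closed in $X^{\mathbb{N}}$ (hence compact) and that each $F(x)$ is nonempty (hence $\pi_0$ is onto), which is all your descent requires. The paper's route, by contrast, packages the descent into the reusable statement that $\pi_0$ itself is open, which it exploits elsewhere.
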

\begin{proof}
(a) Fix $x\in X$, if $y\in (F^{-1})^{-1}(x)$ then $x\in F^{-1}(y)$, which means that $y\in F(x)$ and $(F^{-1})^{-1}(x)\subset F(x)$. Conversely, suppose $y\in F(x)$ then $x\in F^{-1}(y)$, which means that $y\in (F^{-1})^{-1}(x)$ and $F(x)\subset (F^{-1})^{-1}(x)$.
Thus, by Proposition \ref{prop1.2}, $F$ being open implies that $F^{-1}$ is lower semicontinuous. Also, if $F^{-1}$ is lower semicontinuous, by Proposition \ref{prop1.2}, $F$ is an open map.

(b) Let $(x_n)\in {\rm{Orb}}_r(X)$ and $\varepsilon_0>0$. Since $F$ is open, there is $0<\varepsilon_1<\varepsilon_0$ such that 
 $ B_d(x_1,\varepsilon_1)\subset F(B_d(x_0,\varepsilon_0)).$
  By induction, for each $n\in\mathbb{N}$ there is $0<\varepsilon_{n}<\varepsilon_{n-1}$
  such that 
  $B_d(x_n,\varepsilon_n)\subset F(B_d(x_{n-1},{\varepsilon_{n-1}})).$
Taking open neighborhood $U_n$ of $(x_n)$ with the form
 \[ U_n:=(B_d(x_0,\varepsilon_0)\times\cdots\times B_d(x_n,\varepsilon_n)\times \Pi_{i=n+1}^\infty X)\cap {\rm{Orb}}_r(X)\text{ for }n\in\mathbb{N}.\]
Then we have
 \[(x_{n+1})\in (B_d(x_1,\varepsilon_1)\times\cdots\times B_d(x_{n+1},\varepsilon_{n+1})\times \Pi_{i=n+2}^\infty X)\cap {\rm{Orb}}_r(X)\subset \sigma_{rF}(U_n),\]
 which implies that ${\sigma}_{rF}$ is open.
 The proof for $({\rm{Orb}}_r(X),{\sigma}_{rF})$ is nearly same, and we omit it.

 (c)  This follows directly from Lemma \ref{lem3.12} and Corollary \ref{cor3.3}.
    If $\pi_0$ is open, then for any open subset $U$,
 \[F(U)=\pi_0(\sigma_{rF}((U\times\Pi_{i=1}^\infty X)\cap  {\rm{Orb}}_r(X)))\] is open in $X.$
\end{proof}

\section{Inverse limit and Shadowing property}
First, we review some fundamental shadowing property concepts related to single-valued maps and set-valued maps.

Let $(X,F)$ be a set-valued system and $d$ a metric on $X$. Given $\delta>0$, a sequence $\{x_n\}$ in $X$ is called a \textit{$\delta$-pseudo-orbit} if 
$d(x_{n+1},F(x_n))<\delta\text{ for all }n\in\mathbb{N},$
and $\{x_n\}$ is called an \textit{$F$-orbit} if 
$(x_n)\in {\rm{Orb}}_r(X).$ In particular, if $F$ is a single-valued mapping, these notions apply equally well. 
\begin{definition}\rm{(\cite{Wa})}
    Let $(X,f)$ be a single-valued system. We say that $f$ has \textit{shadowing property} (or \textit{POTP}), if for each $\varepsilon>0$ there is $\delta>0$ such that for every
 $\delta$-pseudo-orbit $\{x_n\}$, there is a point $x\in X$ such that $ d(f^n(x),x_n)<\varepsilon\text{ for all }n\in\mathbb{N}.$ In this case, we say that $\{x_n\}$ is $\varepsilon$-shadowed by $x$.
\end{definition}

\begin{definition}\rm{(cf. \cite{PR})}\label{def3.2}
    Let $(X,F)$ be a set-valued system. The map $F$ is said to have \textit{shadowing property}, if for every $\varepsilon>0$ there is $\delta>0$ such that for each $\delta$-pseudo-orbit $\{x_n\}$ of $F$, there exists an $F$-orbit $(y_n)$ satisfying 
    $d(x_n,y_n)<\varepsilon\text{ for all }n\in\mathbb{N}.$ In this case, we say that $\{x_n\}$ is $\varepsilon$-shadowed by $(y_n)$.
\end{definition}

\begin{definition}\rm{(\cite{MRT})}
  A set-valued system  $(X,F)$ is said to have \textit{finite shadowing} if for every $\varepsilon>0$ there is $\delta>0$ such that for each finite sequence $\{x_0,\cdots,x_n\}$ satisfying $d(x_{i},F(x_{i-1}))<\delta$, there is a finite sequence $\{y_0,\cdots,y_n\}$ with $y_i\in F(y_{i-1})$ such that
    $d(x_i,y_i)<\varepsilon\text{ for all }i=0,\cdots,n.$
\end{definition}
\begin{lemma}\rm{(\cite{MRT})}\label{lem3.8}
    Let $F:X\to 2^X$ be an upper semicontinuous set-valued map. Then $F$ has shadowing property if and only if it has finite shadowing.
\end{lemma}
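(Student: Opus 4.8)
The plan is to prove the two implications separately, with the forward direction being essentially formal and the reverse direction resting on a compactness argument in the orbit space.

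For the forward implication (shadowing $\Rightarrow$ finite shadowing), I would fix $\varepsilon>0$ and take the $\delta>0$ supplied by the shadowing property. Given any finite sequence $\{x_0,\dots,x_n\}$ with $d(x_i,F(x_{i-1}))<\delta$, I would extend it to an infinite sequence by appending an exact orbit tail: since $F$ has nonempty compact values, one can choose $x_k\in F(x_{k-1})$ for every $k>n$, and each such step satisfies $d(x_k,F(x_{k-1}))=0<\delta$. The resulting infinite sequence is a $\delta$-pseudo-orbit, so shadowing yields an $F$-orbit $(y_n)$ with $d(x_k,y_k)<\varepsilon$ for all $k$; truncating to the first $n+1$ coordinates gives the required finite shadowing orbit.

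For the reverse implication (finite shadowing $\Rightarrow$ shadowing), fix $\varepsilon>0$ and let $\delta>0$ be the constant from finite shadowing applied with $\varepsilon/2$. Let $\{x_n\}$ be an infinite $\delta$-pseudo-orbit. For each $N\in\mathbb{N}$, the truncation $\{x_0,\dots,x_N\}$ is a finite $\delta$-pseudo-orbit, so finite shadowing produces a finite $F$-orbit $\{y_0^{(N)},\dots,y_N^{(N)}\}$ with $d(x_i,y_i^{(N)})<\varepsilon/2$ for $0\le i\le N$. I would then extend each to an infinite $F$-orbit $(y_n^{(N)})_{n\in\mathbb{N}}$ by choosing $y_k^{(N)}\in F(y_{k-1}^{(N)})$ for $k>N$, again using nonemptiness of $F$-values, producing a sequence of points $(y_n^{(N)})_N$ lying in ${\rm{Orb}}_r(X)\subset X^{\mathbb{N}}$. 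The key step is now compactness: since $X$ is compact, $X^{\mathbb{N}}$ is compact and metrizable under $\rho$, so the sequence $\{(y_n^{(N)})\}_N$ admits a subsequence converging to some $(y_n)\in X^{\mathbb{N}}$. Because $F$ is upper semicontinuous, ${\rm{Orb}}_r(X)$ is a closed subset of $X^{\mathbb{N}}$ (as recorded after Proposition \ref{pro3}), so the limit $(y_n)$ is genuinely an $F$-orbit. Finally, convergence in $\rho$ forces coordinatewise convergence: for each fixed $n$ we have $y_n^{(N)}\to y_n$ along the subsequence, and since $d(x_n,y_n^{(N)})<\varepsilon/2$ for all sufficiently large $N$, passing to the limit gives $d(x_n,y_n)\le\varepsilon/2<\varepsilon$. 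Thus $(y_n)$ $\varepsilon$-shadows $\{x_n\}$.

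The main obstacle, and the only place upper semicontinuity is genuinely used, is guaranteeing that the limit of the approximating finite orbits is itself a true $F$-orbit rather than merely a pseudo-orbit; this is exactly what closedness of ${\rm{Orb}}_r(X)$ provides. The passage from a "within $\varepsilon/2$" estimate to a strict "$<\varepsilon$" bound is the reason for building in the factor $1/2$ at the outset, since limits of strict inequalities are only weak.
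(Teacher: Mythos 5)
Your proof is correct. Note that the paper itself does not prove this lemma --- it is quoted from \cite{MRT} --- so there is no in-paper argument to compare against; your two-step argument (append an exact orbit tail to upgrade a finite pseudo-orbit to an infinite one for the easy direction, then extract a convergent subsequence of extended finite shadowing orbits in the compact space $X^{\mathbb{N}}$ and use closedness of ${\rm{Orb}}_r(X)$, i.e.\ Proposition \ref{pro3}, to see the limit is a genuine $F$-orbit) is exactly the standard one, and your care with the $\varepsilon/2$ margin to survive the passage to the limit is the right touch.
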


The following statement shows that multivalued maps with shadowing property can be generated by carrying out symmetry operations on single-valued maps with shadowing property.

\begin{lemma}\label{lem3.17}
    Let $a \geq 0$ and $f$ a continuous single-valued map on $[a,b]$, and $d$ is the Euclidean metric on $[a,b]$. If $f$ has shadowing property, then the set-valued map $F$ defined by 
    \[
    F(x):=
    \begin{cases} 
        \{f(-x), -f(-x)\}, & x \in [-b, -a], \\  
        \{f(x), -f(x)\}, & x \in [a, b],
    \end{cases}
    \]
    also has shadowing property.
\end{lemma}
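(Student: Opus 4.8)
The plan is to recognize $F$ as the lift of $f$ through the two‑to‑one folding map $p:X\to[a,b]$, $p(x)=|x|$, where $X=[-b,-a]\cup[a,b]$. Since $F$ is only well defined into $2^X$ when $f$ maps $[a,b]$ into itself, we have $f(|x|)\ge a\ge 0$, and one checks directly that $F(x)=\{f(|x|),-f(|x|)\}=p^{-1}\big(f(p(x))\big)$. Consequently every $F$-orbit projects under $p$ to an $f$-orbit, and conversely an $f$-orbit lifts with a free choice of signs. The strategy is therefore to push a pseudo-orbit of $F$ down to a pseudo-orbit of $f$, shadow the latter using the shadowing property of $f$, and then lift the shadowing orbit back up while choosing signs so as to match the original pseudo-orbit.

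First I would show that folding sends $\delta$-pseudo-orbits to $\delta$-pseudo-orbits. Given a $\delta$-pseudo-orbit $\{x_n\}$ of $F$, the quantity $d(x_{n+1},F(x_n))$ equals $\min\big(|x_{n+1}-f(|x_n|)|,\,|x_{n+1}+f(|x_n|)|\big)$, and using $f(|x_n|)\ge 0$ a short case analysis on the sign of $x_{n+1}$ yields the identity $d(x_{n+1},F(x_n))=\big||x_{n+1}|-f(|x_n|)\big|$. Since $|x_n|\in[a,b]$, this shows that $\{|x_n|\}$ is a $\delta$-pseudo-orbit of $f$ in $[a,b]$.

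Next, given $\varepsilon>0$, I would take the $\delta>0$ furnished by the shadowing property of $f$ for this $\varepsilon$ and claim that the same $\delta$ works for $F$. Applying shadowing of $f$ to $\{|x_n|\}$ produces $z\in[a,b]$ with $|f^n(z)-|x_n||<\varepsilon$ for all $n$. I then set $y_n:=\operatorname{sgn}(x_n)\,f^n(z)$, with the convention $\operatorname{sgn}(0)=1$. Because $|y_n|=f^n(z)\in[a,b]$ we have $y_n\in X$; because $F(y_n)=\{\pm f(f^n(z))\}=\{\pm f^{n+1}(z)\}$ and $y_{n+1}=\pm f^{n+1}(z)$, the sequence $\{y_n\}$ is an $F$-orbit no matter which signs are chosen; and since $x_n=\operatorname{sgn}(x_n)\,|x_n|$, we obtain $|y_n-x_n|=|f^n(z)-|x_n||<\varepsilon$. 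This exhibits the required $\varepsilon$-shadowing $F$-orbit.

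The construction is essentially forced once the folding picture is in place, so there is no deep obstacle; the two points requiring care are the pseudo-orbit identity of the first step (which relies on both the symmetry of $F$ and the positivity $f\ge 0$) and the sign bookkeeping in the lift. The structural observation that makes the lift trivial is that $F(y_n)$ always contains both signs $\pm f^{n+1}(z)$, so we are free to pick the sign of $y_{n+1}$ to agree with that of $x_{n+1}$, which collapses the shadowing estimate for $F$ onto the one already established for $f$. The only edge case, when $a=0$ and some $x_n=0$, is harmless, since then any sign choice keeps $y_n$ within $\varepsilon$ of $x_n$.
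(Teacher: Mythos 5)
Your proposal is correct and follows essentially the same route as the paper: fold the $\delta$-pseudo-orbit of $F$ onto $[a,b]$ (the paper does this by iteratively replacing negative terms with their negatives, you do it uniformly via $|x_n|$ and the identity $d(x_{n+1},F(x_n))=\bigl||x_{n+1}|-f(|x_n|)\bigr|$), shadow the resulting pseudo-orbit of $f$, and then reattach the signs $\operatorname{sgn}(x_n)$ to the shadowing orbit, using that $F(y)$ always contains both $\pm f(|y|)$. Your version is a cleaner, case-free packaging of the same argument.
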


\begin{proof}
    Suppose $f$ has shadowing property. Let $\varepsilon > 0$ and $\delta>0$ witness $\varepsilon$-shadowing. Let $\{x_n\}$ be a $\delta$-pseudo-orbit of $F$.
    If all $x_n$ are non-negative or all $x_n$ are non-positive, then, since $f$ has the shadowing property (and so does $-f(-x)$ for $x\in [-b,-a]$), there exists a point $y \in  [a, b]$ or $y\in [-b,-a]$ such that either:
    \[
    \{y, f(y), \ldots, f^n(y), \ldots\} \quad \text{or} \quad \{y, -f(-y), \ldots, -f^n(-y), \ldots\}
    \]
    $\varepsilon$-shadows $\{x_n\}$.

    If $\{x_n\}$ contains both positive and negative elements, we proceed as follows. Assume $x_0 < 0$ then we set $x_0' = -x_0$. Since $F(x) = F(-x)$, we still have 
    $d(F(x_0'), x_1) < \delta.$
    Let $x_i$ (for $i \geq 1$) be the first negative element in the sequence. Then we set $x_i' = -x_i$, by a straightforward calculation, we have 
   $d(F(x_{i-1}), x_i') = d(F(x_{i-1}), x_i) < \delta,$
    and
   $d(f(x_{i-1}), x_i') < \delta.$
    By replacing negative terms in the sequence with their positive counterparts iteratively (i.e., $x_i' = x_i$ if $x_i \geq a$ and $x_i' = -x_i$ otherwise), we construct a non-negative sequence $\{x_n'\}$. Then this sequence is a $\delta$-pseudo-orbit of $f$. 
As $f$ has shadowing property, then there exists a point $y \in [a, b]$ such that
    $d(f^n(y), x_n') \leq \varepsilon.$
    Finally, we define
  $y'_n=f^n(y)$ if $x_n\geq a$ and $y_n'=-f^n(y)$, otherwise.
    From the construction of $F$, it is straightforward to verify that $(y_n')$ is an $F$-orbit that $\varepsilon$-shadows $\{x_n\}$.
    Therefore, $F$ has shadowing property.
\end{proof}

\begin{example}\label{3.18}
    Let $f_c$ be the tent map on $[0,2]$ with slope $\sqrt{2}\leq c\leq 2$. Let $F_c$ be the set-valued map induced as in Lemma \ref{lem3.17}.
    It is shown in \cite{PKY} that the tent map $f_c$ has shadowing property for almost all parameters $c$, therefore, the set-valued map $F_c$ also has shadowing property for almost all parameters $c$. 
\end{example}

\begin{theorem}\label{thm4.7}
Let $(X,F)$ be a set-valued system. 
 If $F$ is onto and continuous with shadowing property, then $F^{-1}$ also has shadowing property.

\end{theorem}
\begin{proof}
    Let $\varepsilon>0$ and $\delta>0$ witness the $\varepsilon$-shadowing. Since $F$ is continuous, there is $\delta_1>0$ such that $d_H(F(x),F(y))<\delta/2$ whenever $d(x,y)<\delta_1$, where $d_H$ denotes the Hausdorff metric. 
    
    We claim that $F^{-1}$ has finite shadowing property, thus, shadowing property. Let \(\{x_0, \ldots, x_n\}\) be a \(\delta_1\)-pseudo-orbit of \(F^{-1}\). By definition, for each \(x_i\) there exists \(x_i' \in F^{-1}(x_i)\) such that  
$d(x_i', x_{i+1}) < \delta_1.$
Since \(F\) is continuous, we have  
$d_H(F(x_{i+1}), F(x_i')) < \delta / 2.  $
Therefore,  
$d(x_i, F(x_{i+1})) < \delta \text{ for } i = 0, \ldots, n - 1, $ 
which implies that \(\{x_n, \ldots, x_0\}\) is a \(\delta\)-pseudo-orbit of \(F\).  
By the shadowing property of \(F\), there exists a finite sequence \(\{y_n, \ldots, y_0\}\), where \(y_i \in F(y_{i+1})\) for all \(i = 0, \ldots, n - 1\), such that  
$d(x_i, y_i) < \varepsilon \quad \text{for all } i = 0, \ldots, n.$ 
Then reversing the order of \(\{y_n, \ldots, y_0\}\), we obtain a sequence \(\{y_0, \ldots, y_n\}\) satisfying \(y_{i+1} \in F^{-1}(y_i)\) for all \(i = 0, \ldots, n - 1\) and  
$d(x_i, y_i) < \varepsilon \text{ for all }i = 0, \ldots, n.$ 
Thus, \(\{y_0, \ldots, y_n\}\) \(\varepsilon\)-shadows \(\{x_0, \ldots, x_n\}\), which proves that \(F^{-1}\) has the shadowing property.

\end{proof}

\begin{corollary}
    If $(X, F)$ is a set-valued system such that $F$ is open, onto and continuous, then $F$ has shadowing property if and only if $F^{-1}$ has shadowing property.
\end{corollary}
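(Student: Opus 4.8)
The plan is to derive both implications from Theorem \ref{thm4.7} together with the openness characterization in Theorem \ref{thm3.4}(a); the corollary is essentially a symmetric reformulation of Theorem \ref{thm4.7}. The forward direction requires almost no work: since $F$ is onto and continuous, Theorem \ref{thm4.7} applies verbatim and shows that $F^{-1}$ has the shadowing property whenever $F$ does. Observe that openness is not even needed for this implication.

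For the converse, I would bootstrap Theorem \ref{thm4.7} by applying it to the system $(X,F^{-1})$ rather than to $(X,F)$. To do this I must verify the three hypotheses of Theorem \ref{thm4.7} for the map $F^{-1}$, namely that it is onto, continuous, and has the shadowing property. The shadowing property of $F^{-1}$ is exactly the standing assumption of this direction. Continuity of $F^{-1}$ is precisely Theorem \ref{thm3.4}(a): since $F$ is open and onto, $F^{-1}$ is continuous, and this is the single place where the openness hypothesis enters. Ontoness of $F^{-1}$ is automatic, since for each $y\in X$ the value $F(y)$ is a nonempty compact set, so choosing any $x\in F(y)$ gives $x\in F(y)$, i.e. $y\in F^{-1}(x)$.

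With all three hypotheses confirmed, Theorem \ref{thm4.7} applied to $(X,F^{-1})$ yields that $(F^{-1})^{-1}$ has the shadowing property. The remaining step is to identify $(F^{-1})^{-1}$ with $F$; this is exactly the computation carried out in the proof of Theorem \ref{thm3.4}(a), where it is shown that $(F^{-1})^{-1}(x)=F(x)$ for every $x\in X$. Hence $F$ has the shadowing property, which completes the converse.

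I do not anticipate any serious obstacle here, as the argument is a short chaining of results already in hand. The only genuine subtlety is the bookkeeping that the hypotheses "$F$ open and onto" upgrade $F^{-1}$ from merely upper semicontinuous to continuous, so that Theorem \ref{thm4.7} may legitimately take $F^{-1}$ as its input. The one point to state carefully is that $F^{-1}$ is a bona fide (well-defined, nonempty-valued) set-valued system; this follows from $F$ being onto and upper semicontinuous, as recalled in the remark preceding Theorem \ref{thm3.4}.
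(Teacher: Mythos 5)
Your proposal is correct and follows essentially the same route as the paper: the forward implication is Theorem \ref{thm4.7} applied to $F$, and the converse is Theorem \ref{thm4.7} applied to $F^{-1}$, which is legitimate because openness and ontoness of $F$ make $F^{-1}$ a continuous, onto set-valued map and $(F^{-1})^{-1}=F$. The paper's own proof is just a one-line version of this; your write-up merely makes explicit the bookkeeping (ontoness of $F^{-1}$, the identification $(F^{-1})^{-1}=F$) that the paper leaves implicit.
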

\begin{proof}
     By Proposition \ref{prop1.2} $F^{-1}$ is a continuous set-valued map, thus, by Theorem \ref{thm4.7}, the conclusion holds.
\end{proof}

\begin{example}
    Let $F_2$ be the set-valued map as defined in Example \ref{3.18}. Then $F_2^{-1}$ also has shadowing property. 
\end{example}

\begin{definition}\rm{(\cite{Wi})}
  A set-valued system $(X,F)$ is called \textit{expansive} (or \textit{positively expansive}) if there exists $\delta>0$ such that $x_0=y_0$ whenever for any two $F$-orbits  $(x_n),(y_n)\in {\rm{Orb}}_r(X)$ with $d(x_n,y_n)<\delta$ for all $n\in \mathbb{N}$.
  The number $\delta$ is called an \textit{expansive constant} for $F$.
 
\end{definition}
\begin{lemma}\rm{(cf. \cite{MRT})}\label{lem4.11}
    Let $(X,F)$ be an upper semicontinuous set-valued system. Then 
         $F$ is expansive if and only if
        $({\rm{Orb}}_r(X),{\sigma}_{rF})$ is expansive.
\end{lemma}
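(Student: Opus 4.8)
The plan is to read off both implications directly from the two definitions, using the elementary observation that, for a pair of $F$-orbits, the $d$-distance between corresponding coordinates and the $\rho$-distance between the iterated shifts control one another through the summable weights $2^{-(n+1)}$ defining $\rho$. I would first isolate this comparison and then dispatch each direction separately. The one point that needs care is that the definition of $F$-expansiveness delivers only the equality of the zeroth coordinates of two orbits, whereas expansiveness of the single-valued shift $\sigma_{rF}$ asks for equality of the entire orbits; in the forward direction this gap is closed by applying the hypothesis to every shift at once.

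For the comparison, observe that for $F$-orbits $(x_n),(y_n)$ and every $k\ge 0$,
\[ \rho\bigl(\sigma_{rF}^k((x_n)),\sigma_{rF}^k((y_n))\bigr)=\sum_{n=0}^\infty\frac{d(x_{k+n},y_{k+n})}{2^{n+1}}. \]
Retaining only the $n=0$ term gives $d(x_k,y_k)\le 2\,\rho\bigl(\sigma_{rF}^k((x_n)),\sigma_{rF}^k((y_n))\bigr)$, while $d(x_j,y_j)<\eta$ for all $j$ yields $\rho\bigl(\sigma_{rF}^k((x_n)),\sigma_{rF}^k((y_n))\bigr)\le\eta\sum_{n=0}^\infty 2^{-(n+1)}=\eta$.

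To prove that $F$ expansive implies $\sigma_{rF}$ expansive, I would fix an expansive constant $\delta$ for $F$ and set $\delta'=\delta/2$. If $F$-orbits $(x_n),(y_n)$ satisfy $\rho(\sigma_{rF}^k((x_n)),\sigma_{rF}^k((y_n)))<\delta'$ for all $k$, then the first inequality gives $d(x_k,y_k)<2\delta'=\delta$ for every $k$. Now for each fixed $m\ge 0$ the shifted sequences $\sigma_{rF}^m((x_n))$ and $\sigma_{rF}^m((y_n))$ are again $F$-orbits whose coordinates stay within $\delta$ of each other, so $F$-expansiveness forces their zeroth coordinates $x_m$ and $y_m$ to coincide. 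Letting $m$ range over $\mathbb{N}$ gives $(x_n)=(y_n)$, so $\delta'$ is an expansive constant for $\sigma_{rF}$.

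For the converse, I would fix an expansive constant $\delta'$ for $\sigma_{rF}$ and set $\delta=\delta'/2$. If $F$-orbits $(x_n),(y_n)$ satisfy $d(x_n,y_n)<\delta$ for all $n$, then the second inequality gives $\rho(\sigma_{rF}^k((x_n)),\sigma_{rF}^k((y_n)))\le\delta<\delta'$ for all $k$, so expansiveness of $\sigma_{rF}$ yields $(x_n)=(y_n)$; in particular $x_0=y_0$, showing that $\delta$ is an expansive constant for $F$. I expect no analytic difficulty here: the only thing to get right is the bookkeeping in the forward direction, where one must invoke $F$-expansiveness on each shift separately in order to upgrade the equality $x_0=y_0$ to equality of the whole orbits.
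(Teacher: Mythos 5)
Your proof is correct and follows essentially the same route as the paper: both directions rest on the coordinatewise comparison between $d$ and $\rho$ (the factor $2$ from the $n=0$ term in one direction, the geometric sum in the other), with an expansive constant rescaled by $1/2$. If anything, you are slightly more careful than the paper in the direction ``$F$ expansive $\Rightarrow$ $\sigma_{rF}$ expansive'': since $F$-expansiveness only yields equality of the zeroth coordinates, one must indeed apply it to every shift $\sigma_{rF}^m$ to conclude $(x_n)=(y_n)$, a step the paper's contrapositive phrasing (``for distinct orbits there exists $k$ with $d(x_k,y_k)>\delta$'') glosses over.
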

\begin{proof}
$(b)\Longrightarrow (a)$. Suppose ${\sigma}_{rF}$ is expansive with expansive constant $\delta > 0$. Let $(x_n)$ and $(y_n)$ be two distinct $F$-orbits such that
$d(x_n, y_n) < \delta \text{ for all } n \in \mathbb{N}.$
Then,
\[
\rho({\sigma}_{rF}^k(x_n), {\sigma}_{rF}^k(y_n)) = \sum_{n=0}^\infty \displaystyle\frac{d(x_{n+k}, y_{n+k})}{2^{n+1}} < \delta,
\]
which contradicts the expansiveness of ${\sigma}_{rF}$. Thus, $F$ is expansive.

$(a)\Longrightarrow (b).$ Conversely, suppose $F$ is expansive with expansive constant $\delta > 0$. For any two distinct $F$-orbits $(x_n), (y_n) \in {\rm{Orb}}_r(X)$, there exists $k \in \mathbb{N}$ such that
$\delta<d(x_k, y_k).$
Then
\[
\delta/2 < \frac{d(x_k, y_k)}{2} \leq \rho({\sigma}_{rF}^k((x_n)), {\sigma}_{rF}^k((y_n))),
\]
which implies that ${\sigma}_{rF}$ is expansive with expansive constant $\delta/2$.
\end{proof}

In \cite{Sa}, Sakai provides an equivalent condition for when an expansive single-valued map possesses shadowing property, that is.

\begin{theorem}\rm{(\cite[Theorem 1]{Sa})}\label{thm3.10}
    Let $f: X \to X$ be an expansive continuous single-valued map on a compact metric space $X$. Then $f$ is open if and only if $f$ has shadowing property.
\end{theorem}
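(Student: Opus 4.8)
The plan is to prove the two implications separately, since they have quite different flavors: the implication ``shadowing $\Rightarrow$ open'' admits a short self-contained argument, while ``open $\Rightarrow$ shadowing'' rests on the structural theory of positively expansive maps, and this is where the real work lies. Throughout, write $c>0$ for an expansive constant of $f$, and recall that both openness and the shadowing property are invariant under passing to an equivalent compatible metric on the compact space $X$, a freedom I will exploit in the harder direction.

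For ``shadowing $\Rightarrow$ open'', I would argue directly. Fix $x\in X$ and choose $\varepsilon$ with $0<\varepsilon<c$; let $\delta>0$ witness $\varepsilon$-shadowing. Given any $y$ with $d(f(x),y)<\delta$, the sequence $x,\,y,\,f(y),\,f^{2}(y),\dots$ is a $\delta$-pseudo-orbit, since the only defect occurs at the initial step, where $d(f(x),y)<\delta$, and every later step is exact. Let $z$ $\varepsilon$-shadow it, so that $d(z,x)<\varepsilon$ and $d(f^{n}(z),f^{n-1}(y))<\varepsilon$ for every $n\ge 1$. Setting $w=f(z)$, these inequalities read $d(f^{m}(w),f^{m}(y))<\varepsilon<c$ for all $m\ge 0$, so the forward orbits of $w$ and $y$ stay within the expansive constant; positive expansiveness then forces $w=y$, i.e. $f(z)=y$. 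Hence $B_d(f(x),\delta)\subseteq f(B_d(x,\varepsilon))$. As $\varepsilon$ may be taken arbitrarily small and $x$ is arbitrary, every point of $f(U)$ is interior to $f(U)$ for each open $U$, so $f$ is open.

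For ``open $\Rightarrow$ shadowing'', I would first invoke the structural fact (due to Reddy) that a positively expansive continuous map on a compact metric space admits a compatible metric $D$ together with constants $\lambda>1$ and $\varepsilon_0>0$ such that $D(x,y)<\varepsilon_0$ implies $D(f(x),f(y))\ge \lambda\,D(x,y)$; by metric-invariance I replace $d$ by $D$. Positive expansiveness makes $f$ injective on $\varepsilon_0$-balls, and together with openness and continuity this upgrades $f$ to a local homeomorphism, so near each point there are well-defined inverse branches, which by the expanding inequality contract distances by the factor $1/\lambda$. A uniform-openness argument on the compact space produces a single radius $s>0$ with $B_D(f(x),s)\subseteq f(B_D(x,\varepsilon_0))$ for all $x$, so that the inverse branches are defined on balls of a uniform size. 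Given these tools, I would establish finite shadowing and pass to the limit: for a $\delta$-pseudo-orbit $x_0,\dots,x_N$ with $\delta$ small enough that all points produced below stay within distance $s$ of the base points, set $y_N=x_N$ and pull back, letting $y_i$ be the preimage of $y_{i+1}$ under the inverse branch based at $x_i$, so $f(y_i)=y_{i+1}$. Writing $e_i=D(y_i,x_i)$ and combining the contraction of the inverse branches with $D(x_{i+1},f(x_i))<\delta$ gives the recursion $e_i\le (e_{i+1}+\delta)/\lambda$ with $e_N=0$, whence $e_i\le \delta/(\lambda-1)$ uniformly in $i$ and $N$. Choosing $\delta$ so small that $\delta/(\lambda-1)<\varepsilon$ yields $\varepsilon$-shadowing of every finite piece, and a diagonal/compactness extraction of the base points $y_0$ as $N\to\infty$ produces a genuine point whose orbit $\varepsilon$-shadows the whole infinite pseudo-orbit, exactly as in the single-valued analogue of Lemma \ref{lem3.8}.

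The main obstacle is concentrated entirely in the second implication, and specifically in its structural input: securing the expanding metric $D$ and the uniform inverse-branch radius $s$. Once $f$ is known to be an expanding local homeomorphism with contracting inverse branches, the backward-iteration construction above is routine. The delicate point is that openness is precisely what guarantees the inverse branches exist at all (local injectivity alone, coming from expansiveness, is not enough), so the hypothesis of openness enters exactly at the step where preimages of the pseudo-orbit must be produced.
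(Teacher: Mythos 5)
The paper does not prove this statement at all: it is imported verbatim as Theorem~1 of Sakai's paper \cite{Sa} and used as a black box, so there is no internal proof to compare yours against. That said, your argument is a correct reconstruction of the standard (essentially Sakai's original) proof. The direction ``shadowing $\Rightarrow$ open'' is exactly the classical argument: the pseudo-orbit $x,y,f(y),f^2(y),\dots$ has its only defect at step zero, and positive expansiveness collapses the shadowing point's image onto $y$, giving $B_d(f(x),\delta)\subseteq f(B_d(x,\varepsilon))$. The direction ``open $\Rightarrow$ shadowing'' correctly identifies the essential structural input, namely Reddy's adapted metric making $f$ uniformly expanding on small balls, after which openness supplies contracting inverse branches with a uniform domain radius and the backward-iteration recursion $e_i\le(e_{i+1}+\delta)/\lambda$ closes the finite-shadowing estimate. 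Two small points deserve explicit care if you write this out in full: first, the recursion requires an inductive verification that $e_{i+1}+\delta$ stays below $\min(s,\varepsilon_0)$ at every step, so $\delta$ must be fixed in advance with $\delta/(\lambda-1)+\delta$ below that threshold before the induction starts; second, the compactness extraction from finite to infinite shadowing only yields the non-strict inequality $d(f^i(y_0),x_i)\le\varepsilon$, so one should run the finite argument at $\varepsilon/2$. Neither issue affects the soundness of the approach.
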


Combining Sakai's result, we have the following result.

\begin{theorem}\label{thm3.8}
    Let $(X, F)$ be a set-valued system. If $F$ is an expansive upper semicontinuous map, then $F$ is open implying that ${\sigma}_{rF}$ has shadowing property.
\end{theorem}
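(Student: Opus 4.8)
The plan is to recognize that $\sigma_{rF}$ is a \emph{single-valued} continuous self-map of the compact metric space $({\rm{Orb}}_r(X),\rho)$, and then to read off its shadowing property directly from Sakai's criterion (Theorem \ref{thm3.10}). To invoke that theorem I must check three things about $\sigma_{rF}$: that its domain is a compact metric space, that $\sigma_{rF}$ is expansive, and that $\sigma_{rF}$ is open.

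First I would verify the ambient space. Since $F$ is upper semicontinuous, ${\rm{Orb}}_r(X)$ is a closed subset of $X^{\mathbb{N}}$ (as remarked just after the definitions of the orbit spaces), and $X^{\mathbb{N}}$ is compact because $X$ is; hence ${\rm{Orb}}_r(X)$ is a compact metric space under $\rho$. The map $\sigma_{rF}$ is the restriction to ${\rm{Orb}}_r(X)$ of the full shift on $X^{\mathbb{N}}$ and carries ${\rm{Orb}}_r(X)$ into itself, so it is a continuous single-valued map. Next, expansiveness of $\sigma_{rF}$ is exactly the content of Lemma \ref{lem4.11}: since $F$ is expansive, $({\rm{Orb}}_r(X),\sigma_{rF})$ is expansive. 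Finally, openness of $\sigma_{rF}$ is supplied by Theorem \ref{thm3.4}(b): because $F$ is open, $\sigma_{rF}$ is open (the construction of the neighborhoods $U_n$ in that proof shows the image of a basic open set is a neighborhood of the image point).

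With these three facts in hand, $\sigma_{rF}$ meets all the hypotheses of Theorem \ref{thm3.10}, namely it is an expansive continuous single-valued map on a compact metric space that is moreover open. Sakai's theorem then yields that $\sigma_{rF}$ has the shadowing property, which is precisely the conclusion.

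As for difficulty, the argument is essentially an assembly of the previously established results, so no new estimates or pseudo-orbit constructions are needed. The only point deserving genuine care is confirming that the notion of openness produced by Theorem \ref{thm3.4}(b) is exactly the single-valued openness demanded by Sakai's theorem; since $\sigma_{rF}$ is genuinely single-valued, the set-valued and single-valued notions of ``open'' coincide, and this compatibility is the place where I would be most attentive to avoid a gap. Analogously, I would double-check that the positive expansiveness transferred by Lemma \ref{lem4.11} is the same expansiveness hypothesis Sakai requires, which it is.
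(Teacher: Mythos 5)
Your proposal is correct and follows essentially the same route as the paper: openness of $\sigma_{rF}$ from Theorem \ref{thm3.4}(b), expansiveness from Lemma \ref{lem4.11}, and then Sakai's Theorem \ref{thm3.10}. You are in fact slightly more careful than the paper's two-line proof, which leaves the compactness of ${\rm{Orb}}_r(X)$ and the transfer of expansiveness implicit.
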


\begin{proof}
    By Theorem \ref{thm3.4}, the shift map ${\sigma}_{rF}$ on ${\rm{Orb}}_r(X)$ is open. Therefore, by Theorem \ref{thm3.10}, ${\sigma}_{rF}$ has shadowing property.
\end{proof}

Thus, combining Lemma \ref{lem4.11}, we have the following result.

\begin{theorem}
    If $F$ is an open expansive upper semicontinuous map with shadowing property. Then $({\rm{Orb}}_r(X),{\sigma}_{rF})$ is topologically stable.
\end{theorem}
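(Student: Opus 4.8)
The plan is to transfer every hypothesis from $F$ to the single-valued shift $\sigma_{rF}$ on $\mathrm{Orb}_r(X)$ and then invoke the classical topological stability theorem for expansive maps with shadowing. First I would record the reductions already available in the paper: since $F$ is open, expansive and upper semicontinuous, Theorem \ref{thm3.8} gives that $\sigma_{rF}$ has the shadowing property; since $F$ is expansive, Lemma \ref{lem4.11} gives that $\sigma_{rF}$ is (positively) expansive; and because $F$ is upper semicontinuous, $\mathrm{Orb}_r(X)$ is a closed, hence compact, subset of $X^{\mathbb{N}}$ on which the shift $\sigma_{rF}$ is a continuous self-map. Thus the statement is reduced to the assertion that a positively expansive continuous map with the shadowing property on a compact metric space is topologically stable (in the single-valued, Walters sense specializing the notion used in \cite{MRT}).

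The heart of the argument is the Walters-type construction of a semiconjugacy. Fix an expansive constant $c$ for $\sigma_{rF}$, choose $0<\varepsilon<c/2$, and let $\delta>0$ witness $\varepsilon$-shadowing for $\sigma_{rF}$. Given any continuous map $g$ on $\mathrm{Orb}_r(X)$ with $\sup_{z}\rho(\sigma_{rF}(z),g(z))<\delta$, note that for each $z$ the forward $g$-orbit $\{g^n(z)\}_{n\ge 0}$ is a $\delta$-pseudo-orbit of $\sigma_{rF}$, since $\rho(\sigma_{rF}(g^n z),g^{n+1}z)<\delta$. By shadowing there is a point whose $\sigma_{rF}$-orbit $\varepsilon$-shadows this pseudo-orbit, and by expansiveness (using $\varepsilon<c/2$) such a point is unique; define $h(z)$ to be that point. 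I would then verify the three defining properties of topological stability: the identity $h\circ g=\sigma_{rF}\circ h$ (because the shadowing orbit of $g(z)$ is the shift of the shadowing orbit of $z$, so uniqueness forces the two to agree), the estimate $\rho(h(z),z)\le\varepsilon$ (the $n=0$ term of the shadowing inequality), and the continuity of $h$.

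The main obstacle I expect is the continuity of $h$, compounded by the fact that $\sigma_{rF}$ is a one-sided shift and hence generally not invertible, so the homeomorphism version of Walters' theorem does not apply verbatim. Continuity should follow from the standard expansiveness-plus-shadowing scheme: if $z_k\to z$ but $h(z_k)\not\to h(z)$, pass to a subsequence with $h(z_{k_j})\to w\ne h(z)$, check (using continuity of $g$ and of $\sigma_{rF}$ together with a coordinatewise limit argument) that the $\sigma_{rF}$-orbit of $w$ still $\varepsilon$-shadows the $g$-orbit of $z$, and then invoke uniqueness from expansiveness to force $w=h(z)$, a contradiction. Non-invertibility is accommodated by working exclusively with forward orbits throughout, which is precisely the positively expansive setting; if one prefers to quote rather than reproduce the construction, the openness of $\sigma_{rF}$ supplied by Theorem \ref{thm3.4}(b) lets the argument be routed through a positively expansive topological stability result. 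Combining $h\circ g=\sigma_{rF}\circ h$ with $\rho(h,\mathrm{id})\le\varepsilon$ then yields that $(\mathrm{Orb}_r(X),\sigma_{rF})$ is topologically stable, completing the proof.
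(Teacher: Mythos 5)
Your proposal matches the paper's (implicit) argument exactly: the paper derives this theorem by combining Theorem \ref{thm3.8} (shadowing for $\sigma_{rF}$) with Lemma \ref{lem4.11} (expansiveness of $\sigma_{rF}$) and then appealing to the classical fact that a positively expansive continuous map with shadowing on a compact metric space is topologically stable. Your write-up simply fills in the Walters-type semiconjugacy construction that the paper leaves to the cited literature, so it is correct and follows essentially the same route.
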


\begin{example}\rm{(\cite[cf. Example 2.2]{MRT})}\label{3.11}
  Let $[0,2]$ endowed with the Euclidean metric $d$ on $[0,2]$. We define a set-valued map $F$ on $[0,2]$ as follows 
  $$
F(x)=\left\{
             \begin{aligned}
             &\{2-2x\}, 0\leq x<1,  \\  
             &\{0,2\}, x=1,\\  
             &\left\{4-2x\right\}, 1<x\leq 2.
             \end{aligned}  
\right.
$$
Here, $F$ is an expansive, upper semicontinuous set-valued open map. By Theorem \ref{thm3.8}, ${\sigma}_{rF}$ has shadowing property, thus, it is topologically stable in the sense of single-valued map.

\end{example}
\begin{remark}
    In \cite{MRT}, the authors presented an example (see \cite[Example 2.2]{MRT}) of a system that satisfies the shadowing property under Definition \ref{def3.2} but did not address whether ${\sigma}_{rF}$ has shadowing property. By Theorem \ref{thm3.8} we know that ${\sigma}_{rF}$ does have shadowing property.
\end{remark}

In \cite{CL}, the authors established that a single-valued map 
$f$ has shadowing property if and only if its associated inverse limit map ${f}_{inv}$ 
  has shadowing property. Here, we investigate shadowing property in the context of (mainly continuous) set-valued maps. 
  
  As Lemma \ref{lem3.8} states, we rely on the finite shadowing property to show that a system has shadowing property.

  For convenience, sometimes, we use the bold symbol $\boldsymbol{x}:=(x_n)$ to denote an element in $
  {\rm{Orb}}_r(X)$ and $\overline{\boldsymbol{x}}:=[x_n]$ to denote an element in ${\rm{Orb}}_{inv}(X)$.

\begin{theorem}\label{thm2.12}
    Let $(X,F)$ be a set-valued system. Then we have:
    \begin{enumerate}[(a)]
        \item If $F$ is continuous, the following are equivalent:
        \begin{enumerate}[(1)]
            \item $F$ has  shadowing property,
            \item ${\sigma}_{rF}$ has  shadowing property,
            \item ${\sigma}_{lF}$ has  shadowing property.
        \end{enumerate}
        \item If $F$ is upper semicontinuous, onto, and open, then $F$ has shadowing property if and only if ${F}_{inv}$ has shadowing property.
    \end{enumerate}
\end{theorem}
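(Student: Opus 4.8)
The plan is to handle the two parts separately; in each case the strategy is to project an orbit on the relevant orbit space to its $0$-th coordinate in $X$, relate pseudo-orbits on the two levels, and reconstruct by lifting, using the fiber-continuity Lemma \ref{lem3.12} together with the finite-shadowing criterion Lemma \ref{lem3.8}.

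For part (a), first note that $(2)\Longleftrightarrow(3)$ is essentially automatic. The reversal map $R\colon{\rm Orb}_l(X)\to{\rm Orb}_r(X)$, $R(w)_k=w_{-k}$, is a well-defined bijection because the left-orbit condition $x_i\in F(x_{i+1})$ becomes the forward-orbit condition after reversing the order; since $\rho$ uses the same weights $2^{-|n|-1}$ on both spaces, $R$ is an isometry, and a direct check gives $R\circ\sigma_{lF}=\sigma_{rF}\circ R$. As shadowing transfers verbatim across an isometric conjugacy, $\sigma_{rF}$ and $\sigma_{lF}$ have shadowing simultaneously, so the content of (a) is $(1)\Longleftrightarrow(2)$. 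For $(1)\Rightarrow(2)$ I would argue directly: given a $\delta$-pseudo-orbit $\{\boldsymbol{x}^{(k)}\}$ of $\sigma_{rF}$, the coordinates $a_k:=\pi_0(\boldsymbol{x}^{(k)})$ form a $2\delta$-pseudo-orbit of $F$; shadowing $\{a_k\}$ by an $F$-orbit $(y_k)$ and setting $\boldsymbol{y}=(y_0,y_1,\dots)$, the telescoped estimate $d(x^{(k)}_j,x^{(k+j)}_0)<2^{j+1}\delta$ plus a tail truncation at a large index $N$ gives $\rho(\sigma_{rF}^k(\boldsymbol{y}),\boldsymbol{x}^{(k)})<\varepsilon$. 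For $(2)\Rightarrow(1)$ I would prove that $F$ has finite shadowing and apply Lemma \ref{lem3.8}: given a finite $\delta$-pseudo-orbit $\{a_0,\dots,a_m\}$ of $F$, build a finite $\delta''$-pseudo-orbit $\{\boldsymbol{x}^{(i)}\}$ of $\sigma_{rF}$ with $\pi_0(\boldsymbol{x}^{(i)})=a_i$ by \emph{downward} induction, choosing $b_i\in F(a_i)$ with $d(b_i,a_{i+1})<\delta$, using Lemma \ref{lem3.12} to find $\boldsymbol{z}\in\pi_0^{-1}(b_i)$ with $\rho(\boldsymbol{z},\boldsymbol{x}^{(i+1)})<\delta''$, and setting $\boldsymbol{x}^{(i)}:=(a_i)\frown\boldsymbol{z}$; extending to an infinite pseudo-orbit and shadowing by $\sigma_{rF}$, the shadowing point projects under $\pi_0$ to the required finite $F$-orbit.

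For part (b) the structural observation driving everything is that ${\rm Orb}_{inv}(X)={\rm Orb}_r(X,F^{-1})$, since $x_i\in F(x_{i+1})$ is exactly $x_{i+1}\in F^{-1}(x_i)$; correspondingly $F_{inv}$ is the set-valued inverse of the shift on ${\rm Orb}_r(X,F^{-1})$, and $\varphi_0$ is the $0$-th coordinate projection. Because $F$ is onto and open, Theorem \ref{thm3.4}(a) gives that $F^{-1}$ is continuous, so Lemma \ref{lem3.12} applies to the system $(X,F^{-1})$: nearby points of $X$ have $\rho_H$-close fibers $\varphi_0^{-1}(\cdot)$. I will use repeatedly that $\varphi_0$ semiconjugates $F_{inv}$ to $F$, so any $F_{inv}$-orbit $\{\overline{\boldsymbol{y}}^{(k)}\}$ has $u_k:=\varphi_0(\overline{\boldsymbol{y}}^{(k)})$ an $F$-orbit with $\overline{\boldsymbol{y}}^{(k)}=(u_k,u_{k-1},\dots,u_0,\ast,\ast,\dots)$, and that $d(\varphi_0(\overline{\boldsymbol{u}}),\varphi_0(\overline{\boldsymbol{v}}))\le2\rho(\overline{\boldsymbol{u}},\overline{\boldsymbol{v}})$. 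Since $F$ and $F_{inv}$ are both upper semicontinuous, Lemma \ref{lem3.8} lets me work with finite shadowing throughout. For $F_{inv}$ sh $\Rightarrow$ $F$ sh I lift a finite $F$-pseudo-orbit $\{a_k\}$ to an $F_{inv}$-pseudo-orbit by \emph{forward} induction: having $\overline{\boldsymbol{x}}^{(k)}\in\varphi_0^{-1}(a_k)$, pick $b_k\in F(a_k)$ near $a_{k+1}$, use continuity of $F^{-1}$ and Lemma \ref{lem3.12} to find a backward orbit $\boldsymbol{w}\in\varphi_0^{-1}(w_1)$ with $w_1\in F^{-1}(a_{k+1})$ and $\rho(\boldsymbol{w},\overline{\boldsymbol{x}}^{(k)})$ small, and set $\overline{\boldsymbol{x}}^{(k+1)}:=(a_{k+1})\frown\boldsymbol{w}$; the prepend structure of $F_{inv}$ makes this a genuine pseudo-orbit, and shadowing followed by $\varphi_0$ yields the $F$-orbit. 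For the reverse implication $F$ sh $\Rightarrow$ $F_{inv}$ sh, I start from a finite $F_{inv}$-pseudo-orbit $\{\overline{\boldsymbol{x}}^{(k)}\}$, note its $0$-th coordinates $a_k=\varphi_0(\overline{\boldsymbol{x}}^{(k)})$ form an $F$-pseudo-orbit, shadow them by an $F$-orbit $\{u_k\}$, and assemble $\overline{\boldsymbol{y}}^{(k)}=(u_k,\dots,u_0)\frown\boldsymbol{t}$, where the single backward tail $\boldsymbol{t}$ from $u_0$ is chosen via Lemma \ref{lem3.12} to match $\overline{\boldsymbol{x}}^{(0)}$ on its first $N$ coordinates.

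The main obstacle, and the step I expect to require the most care, is the uniform-in-$k$ estimate of $\rho(\overline{\boldsymbol{y}}^{(k)},\overline{\boldsymbol{x}}^{(k)})$ in this last construction. The coordinates split into three regimes: the diagonal entries $u_{k-j}$ (for $j\le k$), which track $x^{(k)}_j$ by telescoping $d(x^{(k+1)}_j,x^{(k)}_{j-1})<2^{j+1}\delta$; the fixed tail $\boldsymbol{t}$, which must approximate $x^{(k)}_{k+\ell}$ for the finitely many small $k$; and the deep tail beyond index $N$, absorbed by $\sum_{j\ge N}2^{-j-1}<\varepsilon/4$. The tension is that one backward orbit $\boldsymbol{t}$ cannot exactly match the tails of all $\overline{\boldsymbol{x}}^{(k)}$, but the mismatched entries carry geometrically small weight and, for $k+\ell<N$, all $x^{(k)}_{k+\ell}$ lie within $O(2^{N}\delta)$ of $x^{(0)}_\ell$ by the same telescoping; so fixing $N$ first (to kill the deep tail), then the $F$-shadowing tolerance and the Lemma \ref{lem3.12} tolerance (to control the remaining finitely many coordinates) closes the estimate. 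This bookkeeping is the crux; the remaining work is a routine assembly of Lemmas \ref{lem3.8} and \ref{lem3.12} with Theorem \ref{thm3.4}(a).
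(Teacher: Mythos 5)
Your proposal is correct and follows essentially the same route as the paper: project pseudo-orbits to $X$ via $\pi_0$ (resp.\ $\varphi_0$), lift back using the fiber-continuity Lemma~\ref{lem3.12} (applied to $F^{-1}$ in part (b), via Theorem~\ref{thm3.4}(a) and the identification ${\rm Orb}_{inv}(X)={\rm Orb}_r(X,F^{-1})$), prepend coordinates to build the lifted pseudo-orbit, and reduce to finite shadowing by Lemma~\ref{lem3.8}. The only notable (and welcome) variations are your explicit reversal isometry conjugating $\sigma_{lF}$ to $\sigma_{rF}$, which handles $(2)\Leftrightarrow(3)$ more cleanly than the paper's omitted ``similar'' argument, and your telescoping-plus-tail-truncation estimate in place of the paper's recursive $\beta_i$ bounds; both are cosmetic rather than structural differences.
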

\begin{proof}
(a). $(2)\Longrightarrow (1)$ Suppose ${\sigma}_{rF}$ has shadowing property. 
Let $\varepsilon > 0$ and $\delta$ witness $\varepsilon$-shadowing, and let $\delta_1>0$ witness the condition of Lemma \ref{lem3.12} with respect to $\delta$, that is, if $d(x,y)<\delta_1$ and $(y_n)\in {\rm{Orb}}_r(X)$ with $y_0=y$, then there is a point $(x_n)\in {\rm{Orb}}_r(X)$ with $x_0=x$ such that $\rho((x_n),(y_n))<\delta$. Recall that the metric $\rho$ on ${\rm{Orb}}_r(X)$ is defined by 
\[\rho((x_n),(y_n))=\sum_{n\in\mathbb{A}}^\infty\displaystyle\frac{d(x_n,y_n)}{2^{|n|+1}}.\]
Let $\{x_0, \cdots, x_n\}$ be a finite sequence such that 
\[
d(F(x_i), x_{i+1}) < \delta_1, \text{ for all }i=0,\cdots,n-1.
\]
For each $i$, there exists $x_{i-1}' \in F(x_{i-1})$ such that 
\[
    d(x_{i-1}', x_i) < \delta_1.
\]
Let $\boldsymbol{x}_n\in {\rm{Orb}}_r(X)$ with $\pi_0(\boldsymbol{x}_n)=x_n$. By Lemma \ref{lem3.12}, we can find an $F$-orbit $\boldsymbol{x}'_{n-1}\in {\rm{Orb}}_r(X)$ with $\pi_0(\boldsymbol{x}'_{n-1})=x_{n-1}'$ such that 
\[
\rho(\boldsymbol{x}'_{n-1}, \boldsymbol{x}_n) < \delta,
\]
By adding $x_{n-1}$ to the first coordinate of $\boldsymbol{x}_{n-1}'$ obtaining a new point $\boldsymbol{x}_{n-1}\in {\rm{Orb}}_r(X)$ such that 
\[
\rho({\sigma}_{rF}(\boldsymbol{x}_{n-1})=\boldsymbol{x}'_{n-1}, \boldsymbol{x}_n) < \delta.
\]

By repeating the above process, suppose that we have constructed points $\boldsymbol{x}_i,\cdots,\boldsymbol{x}_n\in {\rm{Orb}}_r(X)$. For $i-1$, as $d(F(x_{i-1}),x_i)<\delta_1$, there is $x_{i-1}'\in F(x_{i-1})$ such that $d(x'_{i-1},x_i)<\delta_1$. Hence, there is point $\boldsymbol{x}'_{n-1}$ with $\pi_0(\boldsymbol{x}'_{n-1})=x_{n-1}'$ satisfying $\rho(\boldsymbol{x}_{n-1}',\boldsymbol{x}_i)<\delta$. Similarly, by adding $x_{i-1}$ to the first coordinate of $\boldsymbol{x}_{i-1}'$ obtaining a new point $\boldsymbol{x}_{i-1}\in {\rm{Orb}}_r(X)$ such that 
\[\rho(\sigma_{rF}(\boldsymbol{x}_{i-1},\boldsymbol{x}_i)<\delta.\]
Thus, we obtain a finite sequence of points $\boldsymbol{x}_1,\cdots,\boldsymbol{x}_n$ with $\pi_0(\boldsymbol{x}_i)=x_i$ for all $i$ satisfying 
\[\rho(\sigma_{rF}(\boldsymbol{x}_{i-1}),\boldsymbol{x}_i)<\delta.\]

Finally, since $\sigma_{rF}$ has finite shadowing, there exists $\boldsymbol{y}=(y_n) \in {\rm{Orb}}_r(X)$ such that 
\[
\rho(\sigma_{rF}^j(\boldsymbol{y}),\boldsymbol{x}_j) < \varepsilon \text{ for all } 0 \leq j \leq n.
\]
Then, we have
\[
d(y_j, x_{j}) < 2 \rho({\sigma}_{rF}^j(\boldsymbol{y}), \boldsymbol{x}_j) < 2\varepsilon.
\]
As $\varepsilon$ can be taken arbitrarily, $F$ has the shadowing property.

$(1)\Longrightarrow (2)$ Conversely, suppose $F$ has shadowing property. Let $\varepsilon>0$. Given $\varepsilon_0>0$ such that $\varepsilon_0$ witnesses the condition of Lemma \ref{lem3.12} with respect to $\varepsilon/2$, that is, if $d(x,y)<\varepsilon_0$ then \begin{align}\label{4.1}
    \rho_H(\pi^{-1}_0(x),\pi^{-1}_0(y))<\frac{\varepsilon}{2}.
\end{align}
Since $F$ has shadowing property, let $0<\delta<\min\{\varepsilon_0/2,\varepsilon/2\}$ witness the $\varepsilon_0/2$-shadowing. Now, let $\{\boldsymbol{x}_i\}_{i=1}^k$ be a $\delta/2$-pseudo-orbit in $\rm{Orb}_r(X)$, then from the definition, $\{x_i\}_{i=1}^k=\{\pi_0(\boldsymbol{x}_i)\}_{i=1}^k$ is a $\delta$-pseudo-orbit of $F$. Then there is a finite orbit $(z_i)_{i=1}^k$ such that 
\[d(x_i,z_i)<\frac{\varepsilon_0}{2}.\]

Consider $x_k$ and $z_k$, from \eqref{4.1}, there is an $F$-orbit $\boldsymbol{z}_k$ such that 
\[\rho(\boldsymbol{x}_k,\boldsymbol{z}_k)<\frac{\varepsilon}{2}.\]
By adding $z_{k-1}$ to the first coordinate of $\boldsymbol{z}_k$, we obtain a new $F$-orbit $\boldsymbol{z}_{k-1}$, then 
\begin{align*}
    \rho(\boldsymbol{x}_{k-1},\boldsymbol{z}_{k-1})&\leq \frac{d(x_{k-1},z_{k-1})}{2}+\frac{\rho(\sigma_{rF}(\boldsymbol{x}_{k-1}),\sigma_{rF}(\boldsymbol{z}_{k-1}))}{2}\\
    &=\frac{d(x_{k-1},z_{k-1})}{2}+\frac{\rho(\sigma_{rF}(\boldsymbol{x}_{k-1}),\boldsymbol{z}_k)}{2}\\&
    \leq (\frac{d(x_{k-1},z_{k-1})}{2}+\frac{\rho(\sigma_{rF}(\boldsymbol{x}_{k-1}),\boldsymbol{x}_k)+\rho(\boldsymbol{x}_k,\boldsymbol{z}_k)(=\beta_0)}{2})(=\beta_1)\\
    &\leq\frac{\varepsilon_0}{4}+\frac{\delta/2+\beta_0}{2}\leq \frac{\varepsilon_0}{4}+\frac{\varepsilon/2+\beta_0}{2}.
\end{align*}

By induction, suppose $\boldsymbol{z}_{i}$ has been constructed, by adding $z_{i-1}$ to the first coordinate of $\boldsymbol{z}_i$, we obtain a new $F$-orbit $\boldsymbol{z}_{i-1}$, then 
\begin{align*}
      \rho(\boldsymbol{x}_{i-1},\boldsymbol{z}_{i-1})&\leq \frac{d(x_{i-1},z_{i-1})}{2}+\frac{\rho(\sigma_{rF}(\boldsymbol{x}_{i-1}),\sigma_{rF}(\boldsymbol{z}_{i-1}))}{2}\\
    &=\frac{d(x_{i-1},z_{i-1})}{2}+\frac{\rho(\sigma_{rF}(\boldsymbol{x}_{i-1}),\boldsymbol{z}_i)}{2}\\&
    \leq (\frac{d(x_{i-1},z_{i-1})}{2}+\frac{\rho(\sigma_{rF}(\boldsymbol{x}_{i-1}),\boldsymbol{x}_i)+\rho(\boldsymbol{x}_i,\boldsymbol{z}_i)(= \beta_{k-i})}{2})(=\beta_{k-i-1})\\
    &\leq\frac{\varepsilon_0}{4}+\frac{\delta/2+\beta_{k-i}}{2} \leq\frac{\varepsilon_0}{4}+\frac{\varepsilon/2+\beta_{k-i}}{2}. 
\end{align*}
Hence, by calculating the formula
\[\alpha_{k-i-1}=\frac{\varepsilon_0}{4}+\frac{\varepsilon/2+\alpha_{k-i}}{2}=\frac{\alpha_{k-1}}{2}+\frac{\varepsilon_0+\varepsilon}{4},\]
we obtain
\begin{align}\label{eq4.2}
    \rho(\boldsymbol{x}_{i-1},\boldsymbol{z}_{i-1})\leq (\frac{1}{2})^i(-\frac{\varepsilon}{4})+\frac{\varepsilon+\varepsilon_0}{2}<\varepsilon\text{ for all }i\leq k.
\end{align}
Notice that $\sigma_{rF}^i(\boldsymbol{z}_1)=\boldsymbol{z}_{i+1}$. Then from \eqref{eq4.2} we obtain $\boldsymbol{z}_1$ $\varepsilon$-shadows $\{\boldsymbol{x}_i\}_{i=1}^k$. 
Therefore, $(\rm{Orb}_{r}(X),\sigma_{rF})$ has shadowing property.

$(1)\Longrightarrow (3)$ is similar to the proof of $(1)\Longrightarrow (2)$, and $(3)\Longrightarrow (1)$ is similar to the proof of $(2)\Longrightarrow (1)$, we omit them.

(b) Suppose that $F_{inv}$ has shadowing property. Given $\varepsilon>0$, let $\delta>0$ witness $\varepsilon$-shadowing in the sense of Definition \ref{def3.2}. Since $F$ is open, $F^{-1}$ is continuous, and notice that $\rm{Orb}_{inv}(F)=\rm{Orb}_r(F^{-1})$. Hence, by Lemma \ref{lem3.12}, let $\delta_1>0$ witness the condition of Lemma \ref{lem3.12} with respect to $\delta$, that is, if $d(x,y)<\delta_1$ for any $x,y\in X$, then
\[\rho_H(\varphi^{-1}_0(x),\varphi^{-1}_0(y))<\delta,\]
where $\varphi_0$ is the first coordinate projection from $\rm{Orb}_{inv}(F)$ to $X$.

Let $\{x_0,\cdots,x_m\}$ be a $\delta_1$-pseudo-orbit. Consider $x_0$, there is $x_0'\in F(x_0)$ such that $d(x_0',x_0)<\delta_1$. Select $\overline{\boldsymbol{x}}_0'$ with $\varphi_0(\overline{\boldsymbol{x}}_0')=x_0'$ and $\varphi_1(\overline{\boldsymbol{x}}_0')=x_0$. Then there exists a point $\overline{\boldsymbol{x}}_1\in \varphi^{-1}_0(x_1)$ such that $\rho(\overline{\boldsymbol{x}}_0',\overline{\boldsymbol{x}}_1)<\delta$. Put $\overline{\boldsymbol{x}}_0=\sigma_{rF^{-1}}(\overline{\boldsymbol{x}}_0')$. Then it is easy to verify that 
\[\rho(F_{inv}(\overline{\boldsymbol{x}}_0),\overline{\boldsymbol{x}}_1)<\rho(\overline{\boldsymbol{x}}_0',\overline{\boldsymbol{x}}_1)<\delta.\]
Consider $x_1$ and $x_2$, there is $x_1'\in F(x_1)$ such that $d(x_1',x_2)<\delta_1$. By adding $x_1'$ to the first coordinate of $\overline{\boldsymbol{x}}_1$, we obtain a new point $\overline{\boldsymbol{x}}_1'$ in $\rm{Orb_{inv}}(F)$, then there is $\overline{\boldsymbol{x}}_2\in \rm{Orb}_{inv}(F)$ such that $\rho(\overline{\boldsymbol{x}}_1',\overline{\boldsymbol{x}}_2)<\delta$, which means that 
\[\rho(F_{inv}(\overline{\boldsymbol{x}}_1),\overline{\boldsymbol{x}}_2)<\rho(\overline{\boldsymbol{x}}_1,\overline{\boldsymbol{x}}_2)<\delta.\]
By induction, we can construct $\overline{\boldsymbol{x}}_i\in \varphi^{-1}_0(\overline{\boldsymbol{x}}_i)$ such that $\rho(F_{inv}(\overline{\boldsymbol{x}}_{i-1}),\overline{\boldsymbol{x}}_i)<\delta$ for all $i=1,\cdots,m$. Thus, there is a finite $F_{inv}$-orbit $\{\overline{\boldsymbol{y}}_0,\cdots,\overline{\boldsymbol{y}}_m\}$ such that
\[\rho(\overline{\boldsymbol{x}}_i,\overline{\boldsymbol{y}}_i)<\varepsilon\text{ for all }i=0,\cdots,m,\].
Moreover, $\varphi_0(\overline{\boldsymbol{y}}_0),\cdots,\varphi_0(\overline{\boldsymbol{y}}_m)$ is an $F$-orbit such that 
\[d(x_i,\varphi_0(\overline{\boldsymbol{y}}_i))<2\rho(\overline{\boldsymbol{x}}_i,\overline{\boldsymbol{y}}_i)<2\varepsilon.\]
As $\varepsilon$ can be chosen arbitrarily, $F$ has shadowing property.

Conversely, suppose $F$ has shadowing property. Let $\varepsilon>0$ and $\varepsilon_1>0$ witness the condition of Lemma \ref{lem3.12} with respect to $\varepsilon/3$, that is, if $d(x,y)<\varepsilon_1$, then 
\begin{align}\label{eq4.3}
    \rho_H(\varphi_0^{-1}(x),\varphi_0^{-1}(y))<\frac{\varepsilon}{3}.
\end{align}

Let $\delta>0$ witness the $\varepsilon_1/3$-shadowing. Given a $\delta/2$-pseudo-orbit $\{\overline{\boldsymbol{x}}_1,\cdots,\overline{\boldsymbol{x}}_m\}$ in $\rm{Orb}_{inv}(X)$, then
\begin{align}\label{eq4.4}
    \rho(F_{inv}(\overline{\boldsymbol{x}}_{i-1}),\overline{\boldsymbol{x}}_i)<\frac{\delta}{2}\text{ for all }i=1,\cdots,m.
\end{align}
Thus, 
\[d(F(\varphi_0(\overline{\boldsymbol{x}}_{i-1})),\varphi_0(\overline{\boldsymbol{x}}_{i}))<2\rho(F_{inv}(\overline{\boldsymbol{x}}_{i-1}),\overline{\boldsymbol{x}}_{i})<\delta\]
for all $i=1,\cdots,m$,
which means that $\{\varphi_0(\overline{\boldsymbol{x}}_{0}),\cdots,\overline{\boldsymbol{x}}_{m}\}$ is a $\delta$-pseudo-orbit of $F$. Then there is a finite $F$-orbit $(y_0,\cdots,y_m)$ such that 
\[d(\varphi_0(\overline{\boldsymbol{x}}_{i}),y_i)<\frac{\varepsilon_1}{3}.\]
Consider $\overline{\boldsymbol{x}}_{0}$, form \eqref{eq4.3}, there exists a point $\overline{\boldsymbol{y}}_{0}\in \varphi_0^{-1}(y_0)$ such that 
\[\beta_0=\rho(\overline{\boldsymbol{x}}_{0},\overline{\boldsymbol{y}}_{0})<\frac{\varepsilon}{3}.\]
For each $i=1,\cdots,m$, we add the word $y_i\cdots y_1$ to the front of $\overline{\boldsymbol{y}}_0$ to get new points $\overline{\boldsymbol{y}}_i\in \rm{Orb}_{inv}(X)$ and $\{\overline{\boldsymbol{y}}_0,\cdots,\overline{\boldsymbol{y}}_m\}$ is an $F_{inv}$-orbit. Then we can caculate that 
\begin{align*}
    \beta_1&=\rho(\overline{\boldsymbol{x}}_1,\overline{\boldsymbol{y}}_1)\\&<\frac{d(\varphi_0(\overline{\boldsymbol{x}}_1),y_1)}{2}+\frac{\rho(\sigma_{rF^{-1}}(\overline{\boldsymbol{x}}_1),\overline{\boldsymbol{x}}_0)+\rho(\overline{\boldsymbol{x}}_0,\overline{\boldsymbol{y}}_0)}{2}\\&<\frac{\varepsilon_1}{6}+\frac{\delta+\beta_0}{2}.
\end{align*}
By calculation the formula $\alpha_i=\varepsilon_1/6+(\delta+\alpha_{i-1})$, we can obtain
\begin{align*}
    \beta_i&=\rho(\overline{\boldsymbol{x}}_i,\overline{\boldsymbol{y}}_i)\\&<\frac{d(\varphi_0(\overline{\boldsymbol{x}}_i),y_i)}{2}+\frac{\rho(\sigma_{rF^{-1}}(\overline{\boldsymbol{x}}_i),\overline{\boldsymbol{x}}_{i-1})+\rho(\overline{\boldsymbol{x}}_{i-1},\overline{\boldsymbol{y}}_{i-1})}{2}\\&<\frac{\varepsilon_i}{6}+\frac{\delta+\beta_{i-1}}{2}\\&<\frac{1}{2^{i-1}}(\frac{\varepsilon}{3}-\frac{2\varepsilon_1}{3})+\frac{2\varepsilon}{3}\\&<\varepsilon,
\end{align*}
where $\beta_{i-1}=\rho(\overline{\boldsymbol{x}}_{i-1},\overline{\boldsymbol{y}}_{i-1})$. Therefore, $\{\overline{\boldsymbol{y}}_0,\cdots,\overline{\boldsymbol{y}}_m\}$ $\varepsilon$-shadows $\{\overline{\boldsymbol{x}}_0,\cdots,\overline{\boldsymbol{x}}_m\}$, which means that $F_{inv}$ has shadowing property.
\end{proof}

\begin{proposition}\label{prop3.19}
    Let $(X, F)$ be a continuous onto set-valued system. The following statements are equivalent:
    \begin{enumerate}[(a)]
        \item $F$ has the shadowing property.
        \item For any $\varepsilon > 0$, there exist $\delta > 0$ and $N \in \mathbb{N}$ such that for every $\delta$-pseudo-orbit $\{x_n\}$ satisfying 
        \[
        d(x_i^N, x_{i+N}) < \varepsilon/2, \quad \text{for some } x_i^N \in F^N(x_i),
        \]
        the sequence $\{x_{n+N}\}$ can be $\varepsilon$-shadowed by an $F$-orbit.
        \item For any $\varepsilon > 0$, there exist $\delta > 0$ and $N \in \mathbb{N}$ such that for every $\delta$-pseudo-orbit $\{x_n\}$ with 
        \[
        d(x_i^N, x_{i+N}) < \varepsilon/2, \quad \text{for some } x_i^N \in F^N(x_i),
        \]
        the sequence $\{x_n^N\}$ can be $\varepsilon$-shadowed by an $F$-orbit.
    \end{enumerate}
\end{proposition}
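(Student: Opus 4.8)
The plan is to treat the two easy implications $(a)\Rightarrow(b)$ and $(a)\Rightarrow(c)$ directly, and then to obtain the two substantive implications $(b)\Rightarrow(a)$ and $(c)\Rightarrow(a)$ by a single common device. Conditions (b) and (c) only restrict the pseudo-orbits they must track (via the $N$-closeness requirement) and only ask to shadow a tail, so the forward implications from (a) are essentially free. For these I would fix $\varepsilon$, let $\delta$ witness $\varepsilon/2$-shadowing of $F$, and take $N=1$. If an $F$-orbit $(y_n)$ shadows a $\delta$-pseudo-orbit $\{x_n\}$ with $d(y_n,x_n)<\varepsilon/2$, then the shift $(y_{n+N})$ has $d(y_{n+N},x_{n+N})<\varepsilon/2$, which already gives (b); combining this with the hypothesis $d(x_n^N,x_{n+N})<\varepsilon/2$ yields $d(y_{n+N},x_n^N)<\varepsilon$, so $(y_{n+N})$ shadows $\{x_n^N\}$ and (c) follows.

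The core of the argument is an auxiliary observation: \emph{for every fixed $N$ and every $\varepsilon>0$ there is $\delta>0$ so that every $\delta$-pseudo-orbit automatically satisfies the $N$-closeness condition}, i.e. for each $i$ there is $x_i^N\in F^N(x_i)$ with $d(x_i^N,x_{i+N})<\varepsilon/2$. I would prove this by iterating the one-step estimate. Let $\omega$ be a modulus of uniform continuity of $F:X\to(2^X,d_H)$, available since $F$ is continuous on the compact space $X$ (Proposition \ref{prop1.2}(c)). Choose $z_1\in F(x_i)$ with $d(x_{i+1},z_1)<\delta$; inductively, from $d(x_{i+k},z_k)<e_k$ and $d_H(F(x_{i+k}),F(z_k))<\omega(e_k)$ together with $d(x_{i+k+1},F(x_{i+k}))<\delta$, pick $z_{k+1}\in F(z_k)\subset F^{k+1}(x_i)$ with $d(x_{i+k+1},z_{k+1})<e_{k+1}$, where $e_1=\delta$ and $e_{k+1}=\delta+\omega(e_k)$. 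For $N$ fixed, $e_N\to0$ as $\delta\to0$, so a small enough $\delta$ gives $e_N<\varepsilon/2$ and $x_i^N:=z_N$ works.

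For $(b)\Rightarrow(a)$, given $\varepsilon$ let (b) supply $\delta_b,N_b$, and pick $\delta_a\le\delta_b$ small enough that the auxiliary observation yields the $N_b$-closeness for all $\delta_a$-pseudo-orbits. Given any $\delta_a$-pseudo-orbit $\{x_n\}_{n\ge0}$, I would use onto-ness to prepend a genuine backward orbit segment: choose $x_{-1}\in F^{-1}(x_0),\dots,x_{-N_b}\in F^{-1}(x_{-N_b+1})$, each nonempty since $F$ is onto. Reindexing $\tilde x_m=x_{m-N_b}$ gives a $\delta_a$-pseudo-orbit whose junction is exact, hence a $\delta_b$-pseudo-orbit satisfying the $N_b$-closeness; applying (b) shows its tail $\{\tilde x_{m+N_b}\}_{m\ge0}=\{x_m\}_{m\ge0}$ is $\varepsilon$-shadowed by an $F$-orbit, which is exactly the required shadowing of the original sequence. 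The implication $(c)\Rightarrow(a)$ runs identically, inserting the triangle-inequality step $d(y_m,\tilde x_{m+N_b})\le d(y_m,\tilde x_m^{N_b})+d(\tilde x_m^{N_b},\tilde x_{m+N_b})$ to convert the shadowing of $\{\tilde x_m^{N_b}\}$ furnished by (c) into a shadowing of $\{\tilde x_{m+N_b}\}=\{x_m\}$, at the cost of harmless constant adjustments.

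The main obstacle is the index shift built into (b) and (c): they control only a tail $\{x_{n+N}\}$ (or the auxiliary sequence $\{x_n^N\}$), never the initial block $x_0,\dots,x_{N-1}$. I expect this to be the crux, and it is precisely where onto-ness is indispensable — prepending $N$ genuine orbit steps turns the entire original sequence into a tail of a longer pseudo-orbit, so no separate finite-shadowing argument for the initial block is required. The remaining work is bookkeeping: checking that $e_N\to0$ for fixed $N$ and tracking the $\varepsilon/2$-type constants so the final shadowing error stays below the prescribed $\varepsilon$.
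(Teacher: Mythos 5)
Your proposal is correct and follows essentially the same route as the paper's proof: the key step in both is the observation that, by iterating the uniform continuity of $F$, every sufficiently fine pseudo-orbit automatically satisfies the $N$-closeness condition, after which onto-ness is used to prepend $N$ genuine orbit steps so that the original sequence becomes exactly the tail controlled by (b) (or (c)). The only difference is organizational: the paper closes the cycle via $(a)\Rightarrow(b)\Rightarrow(a)$ and $(b)\Leftrightarrow(c)$, whereas you prove $(b)\Rightarrow(a)$ and $(c)\Rightarrow(a)$ separately using the same triangle-inequality adjustment.
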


\begin{proof}
    $(a) \Longrightarrow (b)$. This follows directly from the definition of the shadowing property.

    $(b) \Longrightarrow (a)$. Let $\varepsilon > 0$ and $N>0$. Since $F:X\to 2^X$ is continuous, it is uniformly continuous, then we can find a sequence $\delta_1<\delta_2<\cdots<\delta_{N-1}<\delta_N<\varepsilon/2$ such that 
    \begin{align}\label{eqq4.5}
        d(x,y)<\delta_1+\delta_{i}\text{ implies }d_H(F(x),F(y))<\delta_{i+1}
    \end{align}
    and 
    \[\delta_N+\delta_1<\frac{\varepsilon}{2}.\]
    Now, let $\{x_0,\cdots,x_N,\cdots\}$ be a $\delta_1$-pseudo-orbit, then for each $i$ there is $x_i^{(1)}\in F(x_i)$ such that \[d(x_i^{(1)},x_{i+1})<\delta_1\] and $x_{i+1}'\in F(x_{i+1})$ such that \[d(x_{i+1}',x_{i+2})<\delta_1.\] From \eqref{eqq4.5}, as $d_H(F(x_i^{(1)}),F(x_{i+1}))<\delta_2$ we can find $x_i^{(2)}\in F(x_i^{(1)})$ with $d(x_i^{(2)},x_{i+1}')<\delta_2$, so \[d(x_i^{(2)},x_{i+2})<d(x_i^{(2)},x_{i+1}')+d(x_{i+1}',x_{i+2})<\delta_1+\delta_2.\] By induction, suppose $d(x_{i}^{(j-1)},x_{i+j-1})<\delta_1+\delta_{j-1}$ has obtained for some $j<N$, as $d(F(x_{i+j-1}),x_{i+j})<\delta_1$, there is a point $x_{i+j-1}'\in F(x_{i+j-1})$. From \eqref{eqq4.5}, as  $d_H(F(x_{i}^{(j-1)}),F(x_{i+j-1}))<\delta_{j}$, we can find $x_i^{(j)}\in F(x_i^{(j-1)})$ such that $d(x_i^{(j)},x_{i+j-1}')<\delta_j$, so 
    \[d(x_i^{(j)},x_{i+j})<d(x_i^{(j)},x_{i-1+j}')+d(x_{i-1+j}',x_{i+j})<\delta_{j}+\delta_1,\] for each $j=1,\cdots,N$.
    Thus, for each $i\in \mathbb{N}$ there is $x_i^N\in F^N(x_i)$ such that 
    \[d(x_i^N,x_{i+N})<\frac{\varepsilon}{2}.\]
    Thus, $\delta_1$ is the number witnessing the condition of (b). Also, since $F$ is onto, we extend the sequence $\{x_n\}$ by adding a finite orbit points $(x_{-N},\cdots,x_{-1})$ to the front of $\{x_0,\cdots,x_N,\cdots\}$, we obtain $\{x_{-N},\cdots,x_{-1},x_0,\cdots,x_N,\cdots\}$. Therefore, $\{x_0,\cdots,x_N,\cdots\}$ can be $\varepsilon$-shadowed by an $F$-orbit.

    $(b) \Longrightarrow (c)$. Let $\varepsilon>0$ and $\delta>0$ and $N\in\mathbb{N}$ witness the condition of (b) with respect to $\varepsilon/4$. Given a $\delta$-pseudo-orbit $\{x_n\}$, then $\{x_{n+N}\}$ can be $\varepsilon/4$-shadowed by some $F$-orbit $(z_n)$ and 
    \[d(x_i^N,z_n)<d(x_i^N,x_{i+N})+d(x_{i+N},z_i)<\frac{3\varepsilon}{4}.\]
    Thus, $(c)$ holds.

    $(c) \Longrightarrow (b)$. It follows a similar discussion as in $(b) \Longrightarrow (c)$.
\end{proof}

\begin{theorem}
    Let $(X, F)$ be a continuous onto set-valued system. Then $F$ has shadowing property if and only if $F_{inv}$ has shadowing property.
\end{theorem}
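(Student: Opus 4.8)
The plan is to prove both implications through the finite shadowing criterion of Lemma \ref{lem3.8}, which applies because $F$, and hence $F_{inv}$, is upper semicontinuous. Throughout I use the identification ${\rm{Orb}}_{inv}(X)={\rm{Orb}}_r(F^{-1})$ together with the explicit form of an $F_{inv}$-orbit: if $\overline{\boldsymbol{y}}_0=[a_0,a_1,a_2,\dots]$ and $\overline{\boldsymbol{y}}_{j+1}\in F_{inv}(\overline{\boldsymbol{y}}_j)$, then $\overline{\boldsymbol{y}}_j=[c_j,c_{j-1},\dots,c_1,a_0,a_1,\dots]$, where $c_0:=a_0$, the sequence $(c_k)$ is a forward $F$-orbit, and $[a_0,a_1,\dots]$ is a genuine point of ${\rm{Orb}}_{inv}(X)$, i.e. a backward $F$-orbit. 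Since $X$ has diameter $1$, for any $\varepsilon>0$ I first fix $L$ with $\sum_{k>L}2^{-(k+1)}<\varepsilon/4$, so that $\rho$-closeness of two points of ${\rm{Orb}}_{inv}(X)$ is governed by their first $L$ coordinates, the deeper coordinates contributing at most $\varepsilon/4$ automatically.

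For the implication that shadowing of $F$ gives shadowing of $F_{inv}$, I take a finite $F_{inv}$-pseudo-orbit $\{\overline{\boldsymbol{x}}_0,\dots,\overline{\boldsymbol{x}}_m\}$ and extract two pieces of data: the first coordinates $u_i:=\varphi_0(\overline{\boldsymbol{x}}_i)$, which form a finite $F$-pseudo-orbit, and the tail $\tau_k:=\varphi_k(\overline{\boldsymbol{x}}_0)$ of the base point, which is a genuine $F$-orbit read with decreasing index. I then splice these into the single finite $F$-pseudo-orbit $(\tau_L,\dots,\tau_1,\tau_0=u_0,u_1,\dots,u_m)$, whose joints are $O(\delta)$-controlled by the pseudo-orbit inequalities, shadow it by a genuine $F$-orbit $(a_L,\dots,a_1,a_0,c_1,\dots,c_m)$ using the shadowing of $F$, extend the backward part past depth $L$ by ontoness, and reassemble the $F_{inv}$-orbit $\{\overline{\boldsymbol{y}}_0,\dots,\overline{\boldsymbol{y}}_m\}$ with $\overline{\boldsymbol{y}}_0=[a_0,a_1,\dots]$. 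The estimate $\rho(\overline{\boldsymbol{y}}_j,\overline{\boldsymbol{x}}_j)<\varepsilon$ then follows coordinatewise: at a shallow coordinate $k\le L$ one compares $\varphi_k(\overline{\boldsymbol{y}}_j)$ with $\varphi_k(\overline{\boldsymbol{x}}_j)$ through the telescoping chain of the given pseudo-orbit, each link of which costs at most $2^{k+1}\delta$, so the total is made $<\varepsilon/2$ by taking $\delta$ small relative to $2^{L}$; the deeper coordinates give the remaining $\varepsilon/4$ by the diameter bound. This implication uses only the shadowing of $F$.

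The reverse implication, that shadowing of $F_{inv}$ gives shadowing of $F$, is the main obstacle. Here I want to lift a finite $F$-pseudo-orbit $\{x_0,\dots,x_m\}$ to an $F_{inv}$-pseudo-orbit $\{\overline{\boldsymbol{x}}_i\}$ with $\varphi_0(\overline{\boldsymbol{x}}_i)\approx x_i$ and telescoping tails, apply the finite shadowing of $F_{inv}$, and project back by $\varphi_0$, using $d(\varphi_0\overline{\boldsymbol{u}},\varphi_0\overline{\boldsymbol{v}})\le 2\rho(\overline{\boldsymbol{u}},\overline{\boldsymbol{v}})$. The difficulty is precisely the construction of this lift: building backward orbits over points near the $x_i$ whose tails telescope amounts to fiber continuity of $\varphi_0$ on ${\rm{Orb}}_r(F^{-1})$, which is Lemma \ref{lem3.12} applied to $F^{-1}$ and requires $F^{-1}$ to be continuous, that is, $F$ to be open. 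This is exactly the hypothesis exploited in Theorem \ref{thm2.12}(b) but unavailable here. A greedy prepending construction does produce a genuine $F_{inv}$-pseudo-orbit, but then its first coordinates form a genuine $F$-orbit whose error from $\{x_i\}$ compounds over time, so it need not shadow.

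To remove the openness hypothesis I replace fiber continuity by the closing reformulation of shadowing in Proposition \ref{prop3.19}. Since $F$ is continuous and onto, it suffices to verify condition (b) of that proposition; for a $\delta$-pseudo-orbit satisfying the closing condition $d(x_i^N,x_{i+N})<\varepsilon/2$ with $x_i^N\in F^N(x_i)$, the genuine length-$N$ $F$-orbit segments joining $x_i$ to $x_i^N$ supply, after backward extension, the genuine backward orbits needed to assemble a legitimate $F_{inv}$-pseudo-orbit whose first $L$ coordinates track $\{x_{n+N}\}$; the periodic availability of these genuine segments is what controls the compounding backward drift that openness would otherwise tame. Applying the finite shadowing of $F_{inv}$ to this pseudo-orbit and projecting by $\varphi_0$ yields an $F$-orbit $\varepsilon$-shadowing $\{x_{n+N}\}$, so condition (b) holds and $F$ has shadowing. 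The delicate point on which I expect to spend the most care is making these spliced backward orbits simultaneously genuine and tail-compatible while keeping the shallow-coordinate error below $\varepsilon$; this bookkeeping, rather than any single idea, is the technical heart of the argument.
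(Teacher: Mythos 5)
Your overall architecture is sound and, for the direction in which shadowing of $F_{inv}$ implies shadowing of $F$, it coincides with the paper's: reduce to condition (b) of Proposition \ref{prop3.19}, assemble an $F_{inv}$-pseudo-orbit out of genuine length-$N$ forward $F$-orbit segments sitting over the $x_i$, shadow it upstairs by finite shadowing, and project by $\varphi_0$. For the other direction you deviate from the paper in a way that actually buys something: the paper shadows only the first coordinates $u_i=\varphi_0(\overline{\boldsymbol{x}}_i)$ and extends backward arbitrarily by ontoness, which forces an initial depth-error of order $1$ that only decays geometrically as coordinates are prepended, so the paper must invoke Proposition \ref{prop3.19} a second time; you instead splice the genuine backward tail $\tau_L,\dots,\tau_1,\tau_0=u_0$ of the base point into the pseudo-orbit before shadowing, so the deep coordinates of $\overline{\boldsymbol{y}}_0$ also track those of $\overline{\boldsymbol{x}}_0$ and you obtain $\varepsilon$-shadowing from index $0$ directly. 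That splicing argument is correct (the joint at $\tau_0=u_0$ is a genuine transition, the telescoping bound $d(\varphi_k(\overline{\boldsymbol{x}}_j),\varphi_0(\overline{\boldsymbol{x}}_{j-k}))\le 2^{k+2}\delta$ is as you describe, and taking $\delta$ small relative to $L$ closes the estimate), and it is cleaner than the paper's treatment of this direction.

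The one genuine gap is in the hard direction, at exactly the point you defer as ``bookkeeping''. You attribute the genuine length-$N$ segments to the closing condition $d(x_i^N,x_{i+N})<\varepsilon/2$ with $x_i^N\in F^N(x_i)$. But that condition only provides, for each $i$ separately, some chain $x_i=x_i^0,x_i^1,\dots,x_i^N$ with a well-placed \emph{endpoint}; it says nothing about the intermediate points $x_i^j$ tracking $x_{i+j}$, and nothing about consecutive chains being mutually compatible, i.e.\ $d(x_i^{j},x_{i+1}^{j-1})$ small for every $j$. Both are indispensable: without intermediate tracking the orbit you recover downstairs shadows the wrong sequence, and without compatibility the assembled points $\overline{\boldsymbol{x}}_i=[x_i^{N-1},\dots,x_i^1,x_i,\dots]$ simply fail to be a $\rho$-pseudo-orbit of $F_{inv}$, because $\rho(F_{inv}(\overline{\boldsymbol{x}}_i),\overline{\boldsymbol{x}}_{i+1})$ is controlled by \emph{all} the coordinatewise distances $d(x_i^{j},x_{i+1}^{j-1})$, not only the top one; since $F$ is genuinely multivalued, chains chosen independently for each $i$ can diverge immediately. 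The missing step is the inductive construction the paper carries out with its chain $\delta_1<\dots<\delta_N$ and triangular diagram: using the uniform Hausdorff continuity of $F$, one builds $x_{i+1}^{j-1}$ within $\delta_j$ of $x_i^{j}$ step by step, which simultaneously yields $d(x_i^j,x_{i+j})<\sum_{t\le j}\delta_t$. This is where the full continuity (not mere upper semicontinuity) of $F$ enters, and it is also how the hypothesis of Proposition \ref{prop3.19} gets verified in the first place: the closing condition is a consequence of this construction, not its source. With that construction supplied, your proof goes through and is essentially the paper's.
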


\begin{proof}
Suppose that $F_{inv}$ has shadowing property. Let $\varepsilon>0$ and $\delta>0$ witness the $\varepsilon/2$-shadowing for $F_{inv}$. Fix $N\in \mathbb{N}$ such that 
  \begin{align} \label{3.20.7}
        \sum_{n=N}^\infty \frac{1}{2^{n+1}} < \delta/2.
    \end{align}

    To complete the proof, we use a constructive approach as follows:

    Consider $x_0$ and a point $x_0^1\in F(x_0)$. Since $F$ is continuous, there is a sequence $0<\delta_1<\cdots<\delta_N$ satisfying:
    \begin{enumerate}[(1)]
        \item If $d(x_0^1,x_1)<\delta_1$ then $d_H(F(x_0^1),F(x_1))<\delta_2$ for all $x_0^1,x_1\in X$. Consequently, if $x_1^1\in F(x_1)$ with $d(x_1^1,x_2)<\delta_1$, then there is a point $x_0^2\in F(x_0^1)$ (hence, $x_0^2\in F^2(x_0)$) such that 
        \[d(x_0^2,x_1^1)<\delta_2\text{ and }d(x_0^2,x_2)<d(x_0^2,x_1^1)+d(x_1^1,x_2)<\delta_1+\delta_2.\]
        \item If $d(x_0^2,x_1^1)<\delta_2$, then $d_H(F(x_0^2),F(x_1^1))<\delta_3$ for any $x_0^2,x_1^1\in X$. Consequently, if $x_2^1\in F(x_2)$ with $d(x_2^1,x_3)<\delta_1$, as $d(x_1^1,x_2)<\delta_1$ in $(1)$, there exists $x_1^2\in F(x_1^1)$ (hence, $x_1^2\in F^2(x_1)$) such that 
        \[d(x_1^2,x_2^1)<\delta_2.\]
        Further, from $(1)$, since $d(x_0^2,x_1^1)<\delta_2$, there is $x_0^3\in F(x_0^2)$ (hence, $x_0^3\in F^3(x_0)$) such that 
        \[d(x_0^3,x_1^2)<\delta_3\text{ and }d(x_0^3,x_3)<\delta_1+\delta_2+\delta_3,\]
        \[d(x_1^2,x_3)<\delta_1+\delta_2.\]
        \item Consider the start point $x_0$, we end the procedure until we obtain $(x_0^Nx_0^{N-1}\cdots x_0^1x_0)$ such that $d(x_0^i,x_i)<\sum_{j=1}^i\delta_j$. Actually, we have the following graph:

          \begin{equation*}
        \xymatrix{
        x_0^N \ar@{-}[dr]|{\delta_N} \ar@{.}[r] & x_0^3 \ar@{-}[dr]|{\delta_3} & x_0^2 \ar@{-}[dr]|{\delta_2} & x_0^1 \ar@{-}[dr]|{\delta_1} & x_0 \\
        & x_1^{N-1} \ar@{-}[dr]|{\delta_{N-1}} \ar@{.}[r] & x_1^2 \ar@{-}[dr]|{\delta_2} & x_1^1 \ar@{-}[dr]|{\delta_1} & x_1 \\
        && x_2^{N-2} \ar@{.}[r] \ar@{.}[dr] & x_2^1 \ar@{.}[d] & x_2 \ar@{.}[d] \\
        &&& x_{N-1}^1 \ar@{-}[dr]|{\delta_1} & x_{N-1} \\
        &&&& x_N
        }
    \end{equation*}
    Then we continue the procedure to consider start point $x_i$ ($i\geq 2$) we can construct a sequence of points $(x_i^Nx_i^{N-1}\cdots x_i^1x_i)$ such that 
    \begin{enumerate}[(i)]
        \item $d(x_i^j,x_{i+1}^{j-1})<\delta_j$ for all $i\geq 0$ and $N-1\geq j\geq 1$.
        \item $d(x_i^j,x_{i+j})<\sum_{t=1}^j\delta_t$ for all $N-1\geq j\geq 1$.
    \end{enumerate}
    \end{enumerate}

    Since $F$ is continuous, we can choose $\delta_j$ small enough such that $\sum_{j=1}^{N-1}\delta_j<\delta/2$.

    Now, given a $\delta_1$-pseudo-orbit $\{x_i\}$ of $F$. By repeating the process above we can construct $\overline{\boldsymbol{x}}_i$ with $\varphi_j(\overline{\boldsymbol{x}}_i)=x_i^{N-j-1}$ for $j=0,\cdots,N-1$ and $\varphi_N(\overline{\boldsymbol{x}}_i)=x_i$ ensuring 
    \begin{enumerate}[(1')]
        \item $\rho(F_{inv}(\overline{\boldsymbol{x}}_i),\overline{\boldsymbol{x}}_{i+!})<\delta$ (due to (i) and \eqref{3.20.7}).
        \item $d(x_i^{N-1},x_{i+N-1})<\frac{\delta}{2}<\frac{\varepsilon}{8}$ (due to (3)).
    \end{enumerate}
    Since $F_{inv}$ has shadowing property, there is an $F_{inv}$-orbit $(\overline{\boldsymbol{y}}_i)$ such that 
    \[\rho(\overline{\boldsymbol{x}}_i,\overline{\boldsymbol{y}}_i)<\frac{\varepsilon}{2}.\]
    Notice that $\varphi_0(\overline{\boldsymbol{y}}_i)$ is an $F$-orbit. This implies that 
    \[d(\varphi_0(\overline{\boldsymbol{x}}_i),\overline{\boldsymbol{y}}_i)=d(y_i,x_{i}^{N-1})<\varepsilon.\]
    Therefore, by Proposition \ref{prop3.19}, $F$ has shadowing property.

   Conversely, suppose that $F$ has shadowing property. Let $\varepsilon>0$ and $\varepsilon_1=\varepsilon/8$ and $0<\delta_1<\varepsilon_1$ witness $\varepsilon_!$-shadowing. Given a $\delta_1/4$-pseudo-orbit $\{\overline{\boldsymbol{x}_i}\}$ of $F_{inv}$. Then $\{\varphi_0(\overline{\boldsymbol{x}}_i)\}$ is a $\delta_1/2$-pseudo-orbit of $F$. Thus, there is an $F$-orbit $\boldsymbol{y}\in \rm{Orb}_r(X)$ such that 
   \[d(\varphi_0(\overline{\boldsymbol{x}}_i),\pi_i(\boldsymbol{y}))<\varepsilon_1\text{ for all }i\in \mathbb{N}.\]

   Consider $y_0=\pi_0(\boldsymbol{y})$, as $F$ is onto, we can get a point $\overline{\boldsymbol{y}}_0\in \rm{Orb}_{inv}(X)$ with $\varphi_0(\overline{\boldsymbol{y}}_0)=y_0$, then 
   \[\beta_0=\rho(\overline{\boldsymbol{x}}_0,\overline{\boldsymbol{y}}_0)<\frac{d(\varphi_0(\overline{\boldsymbol{x}}_0),y_0)}{2}+\frac{1}{2}<\frac{\varepsilon_1}{2}+\frac{1}{2}.\]
   Since $\rho(\overline{\boldsymbol{x}}_1,F_{inv}(\overline{\boldsymbol{x}}_0))<\frac{\delta_1}{4}$, we have $\rho(\sigma_{rF^{-1}}(\overline{\boldsymbol{x}}_1),\overline{\boldsymbol{x}}_0)<\frac{\delta_1}{2}$, recall that 
   $\sigma_{rF^{-1}}:\rm{Orb}_{inv}(X)\to \rm{Orb}_{inv}(X)$, $(x_0x_1\cdots)\mapsto (x_1x_2\cdots)$. Thus, by adding $y_1$ to the first coordinate of $\overline{\boldsymbol{y}}_0$, we obtain a new point $\overline{\boldsymbol{y}}_1\in \rm{Orb}_{inv}(X)$, and get  
   \begin{align*}
       \beta_1=\rho(\overline{\boldsymbol{x}}_1,\overline{\boldsymbol{y}}_1)<\frac{d(y_1,\varphi_0(\overline{\boldsymbol{x}}_1))}{2}+\frac{\rho(\sigma_{rF^{-1}}(\overline{\boldsymbol{x}}_1),\overline{\boldsymbol{x}}_0)+\rho(\overline{\boldsymbol{x}}_0,\overline{\boldsymbol{y}}_0)}{2}
   \end{align*}
Now, suppose that we have obtained $\overline{\boldsymbol{y}}_{i-1}\in \rm{Orb}_{inv}(X)$, and an estimation of $\beta_{i-1}=\rho(\overline{\boldsymbol{x}}_{i-1},\overline{\boldsymbol{y}}_{i-1})$, then we add $y_i$ to the first coordinate to $\overline{\boldsymbol{y}}_{i-1}$ and obtain a new point $\overline{\boldsymbol{y}}_i\in \rm{Orb}_{inv}(X)$ and get
   \begin{align*}
       \beta_i=\rho(\overline{\boldsymbol{x}}_i,\overline{\boldsymbol{y}}_i)&<\frac{d(y_i,\varphi_0(\overline{\boldsymbol{x}}_i))}{2}+\frac{\rho(\sigma_{rF^{-1}}(\overline{\boldsymbol{x}}_i),\overline{\boldsymbol{x}}_{i-1})+\rho(\overline{\boldsymbol{x}}_{i-1},\overline{\boldsymbol{y}}_{i-1})}{2}\\&=\frac{\varepsilon_1}{2}+\frac{\delta_1/2+\beta_{i-1}}{2}.
   \end{align*}
   Hence, by calculating the formula
   \[\alpha_i=\frac{\varepsilon_1}{2}+\frac{\delta_1}{4}+\frac{\alpha_{i-1}}{2},\] we obtain
\[\beta_i\leq \frac{1}{2^i}(\frac{1}{2}-\frac{\varepsilon_1}{2}-\frac{\delta_1}{4})+\varepsilon_1+\frac{\delta_1}{2}.\]
Then there is $N$ big enough such that 
$\beta_i=\rho(\overline{\boldsymbol{x}}_i,\overline{\boldsymbol{y}}_i)<\varepsilon$ for all $i>N$.
Since $F$ is continuous, $F_{inv}$ is also continuous. Therefore, by choosing $\delta_1$ small enough, the number $N$ and every $\delta_1$-pseudo-orbit meet the condition of Proposition \ref{prop3.19}. Hence, $F_{inv}$ has shadowing property.
   
\end{proof}

\bibliographystyle{amsplain}
%    Insert the bibliography data here.

\medskip
% The information below will be filled in by AIMS production staff.
Received xxxx 20xx; revised xxxx 20xx; early access xxxx 20xx.
\medskip

\end{document}